\newtheorem{theorem}{Theorem}[section]
\newtheorem{lemma}[theorem]{Lemma}
\newtheorem{proposition}[theorem]{Proposition}
\newtheorem{corollary}[theorem]{Corollary}
\theoremstyle{definition}
\newtheorem{definition}[theorem]{Definition}
\newtheorem{construction}[theorem]{Construction}
\theoremstyle{remark}
\newtheorem{remark}[theorem]{Remark}
\let\latexchi\chi
\renewcommand\chi{\@ifnextchar_\sub@chi\latexchi}
\newcommand{\sub@chi}[2]{
  \@ifnextchar^{\subsup@chi{#2}}{\latexchi^{}_{#2}}%
}
\newcommand{\subsup@chi}[3]{
  \latexchi_{#1}^{#3}%
}
\newsavebox{\@brx}
\newcommand{\llangle}[1][]{\savebox{\@brx}{\(\m@th{#1\langle}\)}%
  \mathopen{\copy\@brx\kern-0.5\wd\@brx\usebox{\@brx}}}
\newcommand{\rrangle}[1][]{\savebox{\@brx}{\(\m@th{#1\rangle}\)}%
  \mathclose{\copy\@brx\kern-0.5\wd\@brx\usebox{\@brx}}}
\DeclareMathOperator{\id}{id}
\DeclareMathOperator{\Ker}{Ker}
\DeclareMathOperator{\Hom}{Hom}
\DeclareMathOperator{\scl}{scl}
\DeclareMathOperator{\cl}{cl}
\DeclareMathOperator{\Aut}{Aut}
\DeclareMathOperator{\PSL}{PSL}
\DeclareMathOperator{\Sp}{Sp}
\DeclareMathOperator{\Homeo}{Homeo}
\newcommand{\NN}{\mathbb{N}}
\newcommand{\RR}{\mathbb{R}}
\newcommand{\ZZ}{\mathbb{Z}}
\newcommand{\GG}{\Gamma}
\newcommand{\genus}{\ell}
\newcommand{\G}{G_{\genus}}
\newcommand{\R}{R_{\genus}}
\newcommand{\Gps}{G_{\psi}}
\newcommand{\Nps}{N_{\psi}}
\newcommand{\Gamps}{\Gamma_{\psi}}
\newcommand{\pps}{p_{\psi}}
\newcommand{\A}{A}
\newcommand{\h}{\mathcal{H}}
\renewcommand{\th}{\widetilde{\h}}
\newcommand{\thr}{\th_{\RR}}
\newcommand{\tf}{\widetilde{f}}
\newcommand{\tg}{\widetilde{g}}
\newcommand{\trho}{\widetilde{\rho}}
\newcommand{\talpha}{\widetilde{\alpha}}
\newcommand{\tbeta}{\widetilde{\beta}}
\newcommand{\try}{\widetilde{\rho_{\genus}(y)}}
\newcommand{\trrz}{\widetilde{\rho_{\genus}(z)}}
\newcommand{\taur}{\tau_{\RR}}
\newcommand{\QQQ}{\mathrm{Q}}
\newcommand{\HHH}{\mathrm{H}}
\newcommand{\Symp}{\mathrm{Symp}}
\newcommand{\Ham}{\mathrm{Ham}}
\newcommand{\basic}{\mathrm{basic}}
\newcommand{\W}{\mathrm{W}}
\title[Invariant quasimorphisms and non-equivalence of $\scl$]{Invariant quasimorphisms for groups acting on the circle and non-equivalence of SCL}
\author[S. Maruyama]{Shuhei Maruyama}
\address[Shuhei Maruyama]{Graduate School of Mathematics, Nagoya University, Furocho, Chikusaku, Nagoya, 464-8602, Japan}
\email{m17037h@math.nagoya-u.ac.jp}
\author[T. Matsushita]{Takahiro Matsushita}
\address[Takahiro Matsushita]{Department of Mathematical Sciences, University of the Ryukyus, Nishihara-cho, Okinawa 903-0213, Japan}
\email{mtst@sci.u-ryukyu.ac.jp}
\author[M. Mimura]{Masato Mimura}
\address[Masato Mimura]{Mathematical Institute, Tohoku University, 6-3, Aramaki Aza-Aoba, Aoba-ku, Sendai 9808578, Japan}
\email{m.masato.mimura.m@tohoku.ac.jp}
\begin{document}


\keywords{stable commutator lengths; stable mixed commutator lengths; quasimorphisms; surface groups}
\subjclass[2020]{20F65; 20F67, 20J06, 20F12}

\begin{abstract}
  We construct invariant quasimorphisms for groups acting on the circle.
  Furthermore, we provide a criterion for the non-extendablity of the resulting quasimorphisms and an explicit formula which relates the values of our quasimorphisms to those of the Poincar\'{e} translation number.
  By using them, we show that the stable commutator length $\scl_G$ and the stable mixed commutator length $\scl_{G,N}$ are not bi-Lipschitzly equivalent for the surface group $G=\pi_1(\Sigma_{\ell})$ of genus at least $2$ and its commutator subgroup $N = [\pi_1(\Sigma_{\ell}), \pi_1(\Sigma_{\ell})]$.
  We also show the non-equivalence for a pair $(G,N)$ such that $G$ is the fundamental group of a $3$-dimensional closed hyperbolic mapping torus.
  These pairs serve as the first family of examples of such $(G,N)$ in which $G$ is finitely generated.

\end{abstract}

\maketitle

\section{Introduction}
\subsection{Invariant quasimorphisms}\label{subsec:intro_inv_qm}
A real-valued function $\mu \colon G \to \RR$ on a group $G$ is called a \emph{quasimorphism} if
\[
  D(\mu) = \sup_{g,h \in G} |\mu(gh) - \mu(g) - \mu(h)| < \infty.
\]
A quasimorphism $\mu$ is said to be \emph{homogeneous} if $\mu(g^n) = n \cdot \mu(g)$ for every $g \in G$ and $n \in \ZZ$.
Let $\QQQ(G)$ denote the real vector space of homogeneous quasimorphisms on $G$.
Homogeneous quasimorphisms are studied in many branch of mathematics, such as dynamical system, geometric group theory, and symplectic geometry (see \cite{PR}, \cite{Cal}).

For a normal subgroup $N$ of $G$, a homogeneous quasimorphism $\mu$ on $N$ is said to be \emph{$G$-invariant} if
\[
  \mu(gxg^{-1}) = \mu(x)
\]
for every $g \in G$ and $x \in N$.
Let $\QQQ(N)^G$ denote the real vector space of $G$-invariant homogeneous quasimorphisms on $N$.
Invariant homogeneous quasimorphisms have been constructed in symplectic geometry and geometric group theory (e.g., \cite{EP03}, \cite{Oh05}, \cite{MR3524783} for symplectic geometric constructions and \cite{BM}, \cite{Karlh}, \cite{Karlh2}, \cite{FW} for geometric group theoretic ones; see also \cite{2212.11180}).
Since every homogeneous quasimorphism on $G$ is conjugation invariant, its restriction to $N$ is a $G$-invariant homogeneous quasimorphism on $N$.
That is, the pullback $i^* \colon \QQQ(G) \to \QQQ(N)$ by the inclusion $i \colon N \to G$ factors through $\QQQ(N)^G$.
This is one of the sources of $G$-invariant homogeneous quasimorphisms on $N$.

A $G$-invariant homogeneous quasimorphism on $N$ is said to be \emph{non-extendable} if it is \emph{not} a restriction of a homogeneous quasimorphism on $G$.
In this paper, we provide a method to construct non-extendable $G$-invariant homogeneous quasimorphisms on $N$.
Set $\GG = G/N$ and let $p \colon G \to \GG$ be the projection.
Let
\begin{align*}
  \h = \Homeo_+(S^1)
\end{align*}
be the group of orientation preserving homeomorphisms of the circle, $e_{\RR} \in \HHH^2(\h;\RR)$ the real Euler class, and $\chi_b$ the canonical Euler cocycle.
Let $G$ be a group acting on the circle $S^1$ and $\rho \colon G \to \h$ the representation induced from the action.
Under the assumption that the pullback $\rho^* e_{\RR} \in \HHH^2(G;\RR)$ is contained in the image of $p^* \colon \HHH^2(\GG;\RR) \to \HHH^2(G;\RR)$, we construct invariant homogeneous quasimorphisms
\[
  \nu_{\rho, A, u} \in \QQQ(N)^G,
\]
where $A$ is a group $2$-cochain on $\GG$ and $u$ is a group $1$-cochain on $G$ satisfying $\rho^* \chi_b - p^* A = \delta u$ (Construction \ref{construction}).

Related to the non-extendability, we set
\begin{align}\label{sp_non-ext}
  \W(G,N) = \QQQ(N)^G / (\HHH^1(N;\RR)^G + i^*\QQQ(G)),
\end{align}
where $\HHH^1(N;\RR)^G$ is the vector space of real-valued homomorphisms on $N$ which are invariant under $G$-conjugation.
This space $\W(G,N)$ naturally appears in the context of the comparison of stable commutator length (see Subsection \ref{subsec:intro_app}).
Note that, if a $G$-invariant homogeneous quasimorphism on $N$ is non-zero in $\W(G,N)$, then it is non-extendable to $G$.
To the best of the authors' knowledge, the following example is the essentially only one concrete example of $G$-invariant homogeneous quasimorphism that is non-zero in $\W(G,N)$.
Let $\Sigma_{\genus}$ be a closed connected oriented surface of genus $\genus \geq 2$ and $\omega$ a symplectic form on it.
Let $G$ be the identity component $\Symp_0(\Sigma_{\genus},\omega)$ of the symplectomorphism group of $\Sigma_{\genus}$ and $N$ the Hamiltonian diffeomorphism group $\Ham(\Sigma_{\genus}, \omega)$.
Kawasaki and Kimura showed in \cite{MR4425357} that Py's Calabi quasimorphism, which is an element of $\QQQ(N)^G$, is non-zero in $\W(G,N)$.

The space $\W(G,N)$ is studied in \cite{non-extendable}, and the dimension of $\W(G,N)$ is determined for certain pairs of groups.
For instance, if $G$ is the fundamental group $\pi_1(\Sigma_\genus)$ of the closed connected oriented surface of genus $\genus \geq 2$ and $N$ is the commutator subgroup, then $\dim \W(G,N) = 1$.
However, since the dimension of $\W(G,N)$ is determined in a homological manner, the explicit representatives of the non-zero element in $\W(G,N)$ were unclear at that point.
The above $\nu_{\rho, A, u}$ provides such a representative as follows.
\begin{theorem}[criterion of the non-extendability]\label{thm:intro_non-ext}
  Assume that the second bounded cohomology group $\HHH_b^2(\GG;\RR)$ is trivial.
  If the Euler class $\rho^*e_{\RR}$ is non-zero, then $\nu_{\rho, A, u}$ is \emph{non-zero} in $\W(G,N)$.
  In particular, if $\rho^*e_{\RR}$ is non-zero, then $\nu_{\rho, A, u}$ is \emph{non-extendable} to $G$.
\end{theorem}
Note that if $\GG$ is amenable (e.g., abelian, nilpotent, solvable), then the bounded cohomology groups $\HHH_b^n(\GG;\RR)$ are trivial for $n \geq 1$.

In the case that $G = \pi_1(\Sigma_{\genus})$ of genus $\genus \geq 2$ and its commutator subgroup $N$, we take $\rho \colon G \to \h$ as a Fuchsian representation for instance.
Then the $G$-invariant homogeneous quasimorphism $\nu_{\rho, A, u}$ provides a representative of the unique element of $\W(G,N)$ up to non-zero constant multiple.

Furthermore, we study numerical aspects of $\nu_{\rho, A, u}$.
More precisely, we show that $D(\nu_{\rho, A, u}) \leq 1$ and obtain an explicit formula connecting the value $\nu_{\rho, A, u}(z)$ for $z \in [G,N]$ and the Poincar\'{e} translation number (see Theorem \ref{thm:formula}).
Such numerical pieces of information may be useful when we apply the Bavard duality theorem (see Theorem \ref{thm:mixedBavard}) to certain elements of $[G,N]$; in fact, we employ them to show the non-equivalence of the stable commutator length, which we will explain in the next subsection.

We also obtain a rigidity result on $\nu_{\rho, A, u}$ with respect to semi-conjugacy.
We set
\[
  \Hom(G,\h)_{\basic} = \{ \rho \colon G \to \h \colon \text{a homomorphism with } \rho^*e_{\RR} \in p^*(\HHH^2(\GG;\RR)) \}.
\]
If we assume that $N$ is contained in the commutator subgroup of $G$, then Construction \ref{construction} gives rise to a well-defined map
\[
  \Phi \colon \Hom(G,\h)_{\basic} \to \QQQ(N)^G/\HHH^1(N;\RR)^G.
\]
In particular, the class $\Phi(\rho) = [\nu_{\rho, A, u}] \in \QQQ(N)^G/\HHH^1(N;\RR)^G$ is independent on the choice of $A$ and $u$.
It will be shown that the map $\Phi$ is \emph{rigid} under semi-conjugacy (Proposition \ref{prop:rigidity}).

\subsection{Application to the non-equivalence of $\mathrm{SCL}$}\label{subsec:intro_app}

By using the above invariant homogeneous quasimorphisms, we provide the first family of examples of \emph{finitely generated} group $G$ and its normal subgroup $N$ for which $\scl_G$ and $\scl_{G,N}$ are \emph{not} equivalent.

A \emph{\textup{(}single\textup{)} mixed commutator} is an element of the form $[g, w] = gwg^{-1}w^{-1}$ with $g \in G$ and $w \in N$.
Let $[G, N]$ be the subgroup of $G$ generated by mixed commutators, which we call the \emph{mixed commutator subgroup}.
For an element $x \in [G, N]$, the \emph{mixed commutator length} $\cl_{G, N}(x)$ is the least number of mixed commutators needed to express $x$ as their product.

\begin{definition}[stable mixed commutator length]\label{def:mixed_scl}
Let $G$ be a group, $N$ its subgroup, and $\GG$ the quotient group $G/N$.
The \emph{stable mixed commutator length} $\scl_{G, N}$ is defined as the function
\[
\scl_{G, N}\colon [G,N] \to \RR_{\geq 0};\quad x\mapsto \lim_{m \to \infty} \frac{\cl_{G, N}(x^m)}{m}.
\]
\end{definition}
It follows from Fekete's lemma that the limit above always exists.
If $N = G$, then $[G,G]$, $\cl_{G,G}$ and $\scl_{G,G}$ coincide with the commutator subgroup $G'$, the commutator length $\cl_G$ and the stable commutator length $\scl_G\colon G'\to \RR_{\geq 0}$, respectively.
We study the \emph{equivalence problem} of $\scl_G$ and $\scl_{G,N}$, which asks whether $\scl_G$ and $\scl_{G,N}$ are \emph{equivalent} in the following sense.

\begin{definition}[equivalence of $\scl_G$ and $\scl_{G,N}$]\label{def:equiv}
Let $G$ be a group and $N$ its normal subgroup. We say that $\scl_G$ and $\scl_{G,N}$ are \emph{equivalent} if there exists a positive real constant $C$ such that for every $x \in [G,N]$,
\[
 \scl_{G,N}(x) \leq C \cdot \scl_{G}(x)
\]
holds.
\end{definition}
By definition, the inequality $\scl_{G} \leq \scl_{G,N}$ always holds on $[G,N]$. Hence, $\scl_{G}$ and $\scl_{G,N}$ are equivalent (in the sense of Definition~\ref{def:equiv}) if and only if they are bi-Lipschitzly equivalent in the standard sense on $[G,N]$.
(We note that $\scl_{G,N}$ is not defined on $G'\setminus [G,N]$.)

The Bavard duality theorems (\cite{bavard91}, \cite{MR4452430}; see Theorems \ref{thm:Bavard} and \ref{thm:mixedBavard}) state that the equalities
\[
  \scl_{G}(y) = \sup_{[\mu] \in \QQQ(G)/\HHH^1(G;\RR)} \frac{|\mu(y)|}{2D(\mu)}, \hspace{5mm} \scl_{G,N}(x) = \sup_{[\mu] \in \QQQ(N)^G/\HHH^1(N;\RR)^G} \frac{|\mu(x)|}{2D(\mu)}.
\]
hold for every $y \in [G,G]$ and for every $x \in [G,N]$.
Hence it is expected that the difference between $\scl_{G}$ and $\scl_{G,N}$ is caused by the difference of $\QQQ(G)/\HHH^1(G;\RR)$ and $\QQQ(N)^G/\HHH^1(N;\RR)^G$, that is, the space $\W(G,N) = \QQQ(N)^G / (\HHH^1(N;\RR)^G + i^*\QQQ(G))$.
In fact, if $\W(G,N) = 0$, then $\scl_G$ and $\scl_{G,N}$ are equivalent (\cite{non-extendable}, see also Remark~\ref{remark:equivalence}).
In \cite{non-extendable}, it is also shown that if $\HHH^2(G;\RR) = 0$ and $\GG$ is amenable, then the space $\W(G,N)$ is trivial.
For example, if $G$ is either a free group or the fundamental group of a non-orientable surface and $N$ is the commutator subgroup, then the space $\W(G,N)$ is trivial, and in particular, $\scl_G$ and $\scl_{G,N}$ are equivalent.

If $\scl_{G}$ and $\scl_{G,N}$ are not equivalent, then $\W(G,N)$ must contain a non-zero element.
Kawasaki and Kimura used Py's Calabi quasimorphism, which is non-zero in $\W(G,N)$, to show the non-equivalence of $\scl_{G}$ and $\scl_{G,N}$ for $G = \Symp_0(\Sigma_{\genus}, \omega)$ and $\Ham(\Sigma_{\genus}, \omega)$ (\cite{MR4425357}).
This is the first example of such pairs $(G,N)$.
Based on the work in \cite{KKMM2}, we have variants $(G,N)$ of this example (with smaller $G$) in \cite[Example 7.15]{non-extendable}.

By using our representative $\nu_{\rho, A, u}$ of non-zero elements of $\W(G,N)$ in Theorem \ref{thm:intro_non-ext} and its explicit formula (Theorem \ref{thm:formula}), we show the following; this answers the latter question in Problem 9.9 of \cite{non-extendable} in the negative.
\begin{theorem}[non-equivalence for surface groups]\label{thm:main}
  Let $\G$ be the fundamental group of a closed connected oriented surface $\Sigma_{\genus}$ of genus $\genus \geq 2$ and $\G'$ its commutator subgroup.
  Then $\scl_{\G}$ and $\scl_{\G, \G'}$ are \emph{not} equivalent.
\end{theorem}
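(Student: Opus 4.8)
The plan is to argue by Bavard duality and its mixed analogue. Recall that $\scl_{\G}(x)=\sup_{\varphi}|\varphi(x)|/(2D(\varphi))$, the supremum over homogeneous quasimorphisms $\varphi$ on $\G$, and, by mixed Bavard duality, $\scl_{\G,\G'}(x)=\sup_{\mu}|\mu(x)|/(2D(\mu))$, the supremum over $\G$-invariant homogeneous quasimorphisms $\mu$ on $\G'$. Since any $\varphi\in Q(\G)$ restricts to a $\G$-invariant homogeneous quasimorphism $\varphi|_{\G'}$ with $D(\varphi|_{\G'})\le D(\varphi)$, one has $\scl_{\G,\G'}\ge\scl_{\G}$ on $[\G,\G']$, so the left-hand bi-Lipschitz estimate is automatic and it suffices to produce a sequence $x_m\in[\G,\G']$ together with a $\G$-invariant homogeneous quasimorphism $\mu$ on $\G'$ of finite defect for which $\mu(x_m)$ grows linearly in $m$ while $\scl_{\G}(x_m)$ stays bounded; then $\scl_{\G,\G'}(x_m)\ge\mu(x_m)/(2D(\mu))\to\infty$ while $\scl_{\G}(x_m)\le C$, and no multiplicative constant can bound $\scl_{\G,\G'}$ by $\scl_{\G}$.

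For the lower bound on $\scl_{\G,\G'}$ I would use that $\G'$ is free---it is $\pi_1$ of the maximal abelian cover $\widetilde{\Sigma}_{\genus}\to\Sigma_{\genus}$, an infinite-genus surface on which $\G/\G'\cong\ZZ^{2\genus}$ acts as the deck group---and note that $\G$-invariance of a homogeneous quasimorphism on $\G'$ is exactly $\ZZ^{2\genus}$-invariance, inner automorphisms acting trivially. The delicate point, which I expect to be the main obstacle, is that the usual counting (Brooks) quasimorphisms on the free group $\G'$ are \emph{not} $\ZZ^{2\genus}$-invariant, since conjugation by lifts of the generators $a_i,b_i$ scrambles finite subwords; so $\mu$ must be built from data canonical for the covering. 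A natural candidate is a counting quasimorphism attached to a graph-of-groups splitting of $\G$ with infinite cyclic edge groups (dual to a separating multicurve on $\Sigma_{\genus}$) together with the induced action of $\G'$ on the Bass--Serre tree, symmetrized over the $\ZZ^{2\genus}$-orbit in a telescoping fashion so that the infinite sum converges and yields a genuine quasimorphism of bounded defect. One may instead first push forward along a quotient $\G\to\bar\G$ that still has $\bar\G/\overline{\G'}\cong\ZZ^{2\genus}$ but a more tractable $\overline{\G'}$ and construct the invariant quasimorphism there, using $\scl_{\G,\G'}(x_m)\ge\scl_{\bar\G,\overline{\G'}}(q(x_m))$. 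Either way the technical heart is to check that the resulting $\mu$ is not---up to a homomorphism---a restriction of a quasimorphism on $\G$ of comparable defect, so that it still detects the $x_m$ efficiently.

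The matching upper bound $\scl_{\G}(x_m)=O(1)$ is where the surface relation $[a_1,b_1]\cdots[a_{\genus},b_{\genus}]=1$ enters decisively, and this is what singles out $\G$ rather than a free group. The $x_m$ should be chosen so that expressing them as \emph{mixed} commutators, whose right-hand entries lie in $\G'$, requires arbitrarily many factors (this is what $\mu$ detects), yet, because the surface relator $R=[a_1,b_1]\cdots[a_{\genus},b_{\genus}]$ is a \emph{trivial} product of \emph{ordinary} commutators with $b_i\notin\G'$, one can insert conjugates of $R^{\pm1}$ and rewrite each $x_m$ as a product of a bounded number of ordinary commutators in $\G$, so that $\cl_{\G}(x_m)\le C$ and hence $\scl_{\G}(x_m)\le C$. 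No such compression is available in the free group $F_{2\genus}$ covering $\G$, which is consistent with $(\G,\G')$ being the first finitely generated example. Granting the construction of $\mu$ and of the sequence $x_m$ with the two properties above, the conclusion follows: $\scl_{\G,\G'}(x_m)\to\infty$ whereas $\scl_{\G}(x_m)$ is bounded, so $\scl_{\G}$ and $\scl_{\G,\G'}$ are not bi-Lipschitzly equivalent on $[\G,\G']$.
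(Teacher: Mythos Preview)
Your proposal correctly identifies the overall architecture---find $x_n\in[\G,\G']$ with $\scl_{\G}(x_n)$ bounded but $\mu(x_n)\to\infty$ for some $\mu\in\QQQ(\G')^{\G}$, then invoke mixed Bavard duality---but it is an outline, not a proof. You explicitly write ``granting the construction of $\mu$ and of the sequence $x_m$,'' and indeed neither is constructed. The symmetrized Brooks or Bass--Serre counting quasimorphisms you gesture at are speculative: you do not define them, do not verify finite defect after averaging over an infinite $\ZZ^{2\genus}$-orbit, and do not check that the result is nontrivial modulo $\HHH^1(\G')^{\G}+i^*\QQQ(\G)$. Likewise the elements $x_m$ are never specified, and the upper bound ``insert conjugates of $R^{\pm1}$ and rewrite'' is not an argument.

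The paper's route is quite different and entirely concrete. The invariant quasimorphism comes from a \emph{circle representation}, not from combinatorics on the free group $\G'$: one passes through the quotient $q\colon\G\to\R=\langle a,b\mid [a,b]^{\genus}\rangle$ (sending every $a_i\mapsto a$, $b_i\mapsto b$), builds $\rho_{\genus}\colon\R\to\Homeo_+(S^1)$ using Eisenbud--Hirsch--Neumann to realise $T_{(\genus-1)/\genus}$ as a commutator, and obtains $\mu_{\rho_{\genus}}\in\QQQ(\R')^{\R}$ from the Poincar\'e translation number via the Euler-class machinery of Lemmas~\ref{lem:construction} and~\ref{lem:formula}; then $q'^*\mu_{\rho_{\genus}}\in\QQQ(\G')^{\G}$. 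The sequence is $x_n=[y_1^n,z_1]\cdots[y_{\genus}^n,z_{\genus}]$ with $y_i={}^{b_ia_i^2}b_i^{-1}$, $z_i={}^{b_ia_i^2}a_i^{-\genus}$, so $\scl_{\G}(x_n)\le\genus$ is immediate---no rewriting with the relator is needed for the upper bound. The surface relation enters only to show $x_n\in[\G,\G']$. The lower bound is then a direct translation-number computation: lifting to $\thr$ one finds $\mu_{\rho_{\genus}}([y^n,z])\sim -n(\genus-1)$, and pulling back gives linear growth of $q'^*\mu_{\rho_{\genus}}(x_n)$. None of your proposed mechanisms appear; to turn your sketch into a proof you would still have to produce an explicit $\mu$ and explicit $x_m$, and the paper's Euler-class construction is likely the path of least resistance.
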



Furthermore, we provide a pair $(G,N)$ of a $3$-manifold group $G$ and its normal subgroup $N$ with solvable $G/N$ such that $\scl_{G}$ and $\scl_{G, N}$ are not equivalent.
The precise setting goes as follows: let $\genus \geq 2$. Let $X$ be a hyperbolic $3$-manifold that fibers over the circle with fiber $\Sigma_{\genus}$.
Then the fundamental group $\pi_1(X)$ has the presentation
\begin{align*}
\Gps = \G \rtimes_{\psi} \ZZ = \langle \G, c \mid \psi(\gamma) c \gamma^{-1} c^{-1} \rangle
\end{align*}
for some $\psi \in \Aut_+(\G)$. More precisely, $X$ is a mapping torus of some pseudo-Anosov surface diffeomorphism $f_{\psi}$; $f_{\psi}$ induces an action on $\G$, which corresponds to the automorphism $\psi$ above.
Then the abelianization map $\mathrm{Ab}_{\G}\colon\G \to \ZZ^{2\genus}$ induces a surjection
\[
  \pps \colon \Gps \to \ZZ^{2\genus} \rtimes_{s_{\genus}(\psi)} \ZZ.
\]
Here, $s_{\genus} \colon \Aut_+(\G) \to \Sp(2\genus,\ZZ)$ denotes the symplectic representation.
Set $\Nps=\Ker(\pps)$, which equals $\iota(\G')$. Here, $\iota\colon \G \to \Gps = \G \rtimes_{\psi} \ZZ$ is the natural inclusion.

\begin{theorem}[non-equivalence for hyperbolic mapping tori]\label{thm:main3}
For the pair $(\Gps,\Nps)$ in the setting above, $\scl_{\Gps}$ and $\scl_{\Gps, \Nps}$ are \emph{not} equivalent.
\end{theorem}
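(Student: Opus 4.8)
The plan is to reduce Theorem~\ref{thm:main3} to a statement of the same shape as Theorem~\ref{thm:main}, transported along the inclusion $\iota\colon\G\hookrightarrow\Gps$, which restricts to an isomorphism $\G'\cong\Nps$. For $w\in[\G,\G']$ we have $\iota(w)\in[\Gps,\Nps]$, and since stable commutator length is monotone non-increasing under homomorphisms, $\scl_{\Gps}(\iota(w))\le\scl_{\G}(w)$. Hence it suffices to exhibit a sequence $u_n\in[\G,\G']$ with $\scl_{\G}(u_n)$ bounded and $\scl_{\Gps,\Nps}(\iota(u_n))\to\infty$: then $\scl_{\Gps}(\iota(u_n))$ stays bounded while $\scl_{\Gps,\Nps}(\iota(u_n))$ does not, so the two are not bi-Lipschitzly equivalent on $[\Gps,\Nps]$. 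The witnessing family of Theorem~\ref{thm:main} is the natural starting point for $(u_n)_n$, but it may have to be modified. What needs work is the lower bound on $\scl_{\Gps,\Nps}$: every ``cheap'' estimate here (monotonicity, as well as $\scl_{\Gps,\Nps}(\iota(w))\le\scl_{\G,\G'}(w)$) points the wrong way.

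For the lower bound I would use the mixed Bavard duality theorem (see \cite{non-extendable}): for $x\in[\Gps,\Nps]$,
\[
  \scl_{\Gps,\Nps}(x)=\sup\Bigl\{\,\frac{|\phi(x)|}{2D(\phi)}\ \Big|\ \phi\in\QQQ(\Nps)^{\Gps},\ D(\phi)>0\,\Bigr\},
\]
where $\QQQ(\Nps)^{\Gps}$ is the space of homogeneous quasimorphisms on $\Nps$ invariant under the $\Gps$-conjugation action. Since $\Gps$ is generated by $\iota(\G)$ and the stable letter $c$, and $c$ acts on $\iota(\G)\cong\G$ by $\psi$, the isomorphism $\iota|_{\G'}$ identifies $\QQQ(\Nps)^{\Gps}$ with the $\psi$-invariant subspace $(\QQQ(\G')^{\G})^{\psi}$ of $\QQQ(\G')^{\G}$ (note $\psi$ preserves $\G'$ as $\psi\in\Aut(\G)$). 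So the problem becomes: carry out the construction behind Theorem~\ref{thm:main} inside the $\psi$-invariant subspace $(\QQQ(\G')^{\G})^{\psi}$, i.e.\ produce a $\psi$-invariant quasimorphism detecting a suitable family $u_n\in[\G,\G']$ of bounded $\scl_{\G}$.

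The structural features powering Theorem~\ref{thm:main} survive this extra constraint. Both $\G/\G'=\ZZ^{2\genus}$ and $\Gps/\Nps$ are amenable and $\Nps\cong\G'$ is free, so the exact sequences linking invariant quasimorphisms, invariant first cohomology, and bounded second cohomology have the same form for $(\Gps,\Nps)$ as for $(\G,\G')$; in particular, by amenability of the quotients, $H^{2}_{b}(\Gps;\RR)\cong\bigl(H^{2}_{b}(\G;\RR)\bigr)^{\psi}$. Crucially, $\psi\in\Aut_{+}(\G)$ is orientation-preserving, so it acts trivially on $H^{2}(\G;\RR)=H^{2}(\Sigma_{\genus};\RR)\cong\RR$, while the comparison map $H^{2}_{b}(\G;\RR)\to H^{2}(\G;\RR)$ is surjective by Milnor--Wood; and $\langle\psi\rangle\cong\ZZ$ is amenable and acts isometrically on bounded cohomology, so averaging a bounded $2$-cocycle over $\langle\psi\rangle$ via an invariant mean (done at the level of bounded cochains) produces a $\psi$-invariant bounded class with unchanged image in $H^{2}$. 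Finally, the Wang exact sequence of the fibration $X\to S^{1}$ shows $H^{2}(\Gps;\RR)=H^{2}(X;\RR)$ is nonzero and surjects onto $H^{2}(\Sigma_{\genus};\RR)$. So the bounded-cohomology room responsible for $\scl_{\Gps,\Nps}$ exceeding $\scl_{\Gps}$ remains available after imposing $\psi$-invariance.

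The main obstacle is turning ``remains available'' into an actual bound. Naively averaging a quasimorphism $\phi$ from the proof of Theorem~\ref{thm:main} over $\langle\psi\rangle$ replaces the numerator $\phi(w_n)$ by (essentially) the mean of the values $\phi(\psi^{k}(w_n))$, which need not remain comparable to $\phi(w_n)$ for the original family; so instead one should re-run the argument of Theorem~\ref{thm:main} $\psi$-equivariantly, choosing the $\psi$-invariant quasimorphism $\tilde\phi\in(\QQQ(\G')^{\G})^{\psi}$ and the witnessing elements $u_n\in[\G,\G']$ simultaneously --- the $u_n$ adapted so that $|\tilde\phi(u_n)|/D(\tilde\phi)\to\infty$ while $\scl_{\G}(u_n)$ stays bounded (the latter, as usual, guaranteed by writing each $u_n$ as a product of a bounded number of commutators in $\G$). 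Granting this $\psi$-equivariant refinement, applying it and pushing $u_n$ through $\iota$ yields $\scl_{\Gps,\Nps}(\iota(u_n))\to\infty$ with $\scl_{\Gps}(\iota(u_n))$ bounded, which gives Theorem~\ref{thm:main3}.
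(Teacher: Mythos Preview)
Your reduction is sound: identifying $\QQQ(\Nps)^{\Gps}$ with $(\QQQ(\G')^{\G})^{\psi}$ via $\iota$ and seeking a $\psi$-invariant quasimorphism that blows up on a family of bounded-$\scl_{\G}$ elements is exactly the right translation. But the proposal has a genuine gap at the step you flag yourself: you write ``Granting this $\psi$-equivariant refinement\ldots'' without carrying it out. Averaging the bounded cocycle over $\langle\psi\rangle$ may give a $\psi$-invariant bounded class, but as you correctly note, this does not by itself produce a $\psi$-invariant \emph{quasimorphism} on $\G'$ with controlled defect and the required growth on any specific sequence. ``Re-running the argument $\psi$-equivariantly'' is an aspiration, not a construction: the elements $y_i,z_i$ and the surjection $q\colon\G\to\R$ used in the proof of Theorem~\ref{thm:main} have no reason to be compatible with a pseudo-Anosov $\psi$, and you give no mechanism for producing $\tilde\phi$ and $u_n$ simultaneously.

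The paper closes this gap with two concrete inputs that do not appear in your outline. First, a geometric fact: the Fuchsian representation $\rho\colon\G\to\PSL(2,\RR)\subset\h$ extends to a (universal circle) representation $\overline{\rho}\colon\Gps\to\h$. Lemma~\ref{lem:formula} applied to $\overline{\rho}$ directly produces a $\Gps$-invariant quasimorphism $\mu_{\overline{\rho}}$ on $\Nps$; no averaging is needed. Second, rather than redesigning the witnessing sequence, the paper invokes the one-dimensionality result $\dim_{\RR}\bigl(\QQQ(\G')^{\G}/(\HHH^1(\G')^{\G}+i^*\QQQ(\G))\bigr)=1$ (Theorem~\ref{thm:one-dim_surf}) to write $\mu_{\rho}=a\cdot q'^*\mu_{\rho_{\genus}}+h+i^*\mu$ with $a\neq 0$, where $\mu_{\rho}=\iota^*\mu_{\overline{\rho}}$ on $[\G,\G']$. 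Since $h$ vanishes on $[\G,\G']$ and $i^*\mu$ is uniformly bounded on products of $\genus$ commutators, the growth $|q'^*\mu_{\rho_{\genus}}(x_n)|\to\infty$ from Theorem~\ref{thm:main} transfers directly to $|\mu_{\overline{\rho}}(\iota(x_n))|\to\infty$ for the \emph{same} sequence $x_n$ (Lemma~\ref{lem:comparison}). Both ingredients --- the extension $\overline{\rho}$ and the one-dimensionality --- are what make the argument go through, and your proposal supplies neither.
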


In the proof of Theorem \ref{thm:main}, we use a (non-standard) action of $\G$ on the circle and a resulting invariant homogeneous quasimorphism.
In the proof of Theorem \ref{thm:main3}, we replace them with a Fuchsian representation of $\G$, the representation extended to $\Gps$, and the resulting invariant homogeneous quasimorphisms (see Subsection \ref{subsec:outline} for more detailed outline of the proofs).

In Section \ref{sec:construction}, we provide the construction of our invariant quasimorphisms (Construction \ref{construction}) mentioned in Subsection \ref{subsec:intro_inv_qm}, as well as the criterion for the non-extendability (Theorem \ref{thm:intro_non-ext}) and the explicit formula (Theorem \ref{thm:formula}).
We employ these results to deduce Theorems \ref{thm:main} and \ref{thm:main3}: we describe the outline of the deduction in Subsection \ref{subsec:outline}.

\subsection{Outlined proofs of Theorems~\ref{thm:main} and \ref{thm:main3}}\label{subsec:outline}

Here we describe the outline of the proofs of Theorems~\ref{thm:main} and \ref{thm:main3}; this description simultaneously provides the organization of the present paper from Section \ref{subsec:overflow}.
Throughout this paper, we use the following symbol for group conjugation
\begin{equation}\label{eq:conj}
{}^h z=hzh^{-1}
\end{equation}
for a group $H$ and for $h,z\in H$.
Recall that we set
\[
\h=\Homeo_+(S^1).
\]
As we already mentioned, this group $\h$ plays a key role for our constructions of quasimorphisms (Subsection~\ref{subsec:poincare} and Section~\ref{sec:construction}).

Let $G$ be a group and $N$ its normal subgroup.
To show the non-equivalence of $\scl_G$ and $\scl_{G,N}$, it suffices to find a sequence $(x_{n} )_{n \in \NN}$
of elements in $[G,N]$
that fulfills the following two conditions:
\begin{enumerate}[(a)]
\item $\sup_{n\in \NN}\scl_{G}(x_n)<\infty$;
\item $\sup_{n\in \NN}\scl_{G,N}(x_n)=\infty$.
\end{enumerate}


To obtain such a sequence $(x_n)_{n\in \NN}$, we employ one auxiliary lemma (Lemma~\ref{lem:hojo}) established in Section \ref{subsec:overflow}, one of whose conditions is stated in terms of a $G$-invariant homogeneous quasimorphism on $N$.


To find an element of the mixed commutator subgroup which applicable to
the auxiliary lemma, we use the group $R_{\genus} = \langle a, b \mid [a, b]^{\genus} \rangle$ for $\genus \geq 2$. 
We consider the elements $y = {}^{ba^2} b^{-1} = ba^2 b^{-1} (ba^2)^{-1}$ and $z = {}^{ba^2} a^{-\genus} = ba^2 a^{-\genus} (ba^2)^{-1}$.
The commutator $[y,z]$ is an element of $[\R, \R']$, and in fact, it is written as a product of $\genus - 1$ mixed commutators (Corollary \ref{cor:relation_in_one_relator_grp}).
These $y$ and $z$ are the source of elements to which we apply
the auxiliary lemma.

Due to the theorem of Eisenbud--Hirsch--Neumann \cite{MR656217} (Theorem~\ref{thm:ehn}), we can construct a representation $\rho_{\genus} \colon \R \to \h$ with non-zero Euler class.
By using this representation, we obtain an $\R$-invariant homogeneous quasimorphism
$\mu_{\rho_{\genus}} = \nu_{\rho_{\genus}, A, u}$ on $\R'$ by Construction \ref{construction}.
By employing Theorem \ref{thm:formula}, we will show that this $\mu_{\rho_{\genus}}$ and the copmmutator expression $[y,z]$
fit in the auxiliary lemma (Lemma~\ref{lem:hojo}); this implies that the sequence $([y,z^n])_{n\in\NN}$ fulfills conditions (a) and (b).
Thus, we obtain the non-equivalence of $\scl_{\R}$ and $\scl_{\R, \R'}$ (Theorem \ref{thm:main2}), which will be shown in Section~\ref{sec:thm2}.

In Section~\ref{sec:thm1}, we show Theorem \ref{thm:main}.
Recall that $\G$ denotes the fundamental group of a closed connected oriented surface of genus $\genus \geq 2$.
The natural projection $q \colon \G \to \R$ induces a surjection $q' \colon \G' \to \R'$ on their commutator subgroups.
Then the pullback $q'^* \mu_{\rho_{\genus}}$ is a $\G$-invariant homogeneous quasimorphism on $\G'$.
By taking lifts $y_i, z_i \in \G$ of $y,z \in \R$ appropriately (the precise form is given in \eqref{eq:y_i,z_i}),
we can apply the auxiliary lemma to
$q'^* \mu_{\rho_{\genus}}$ and the commutator expression $[y_1, z_1] \cdots [y_{\genus}, z_\genus]$. This implies that the sequence $(x_n)_{n\in\NN}$ fulfills conditions (a) and (b), where we set
\begin{equation}\label{eq:oreno_x}
  (x_n)_{n\in \NN}=([y_1, z_1^{n}] \cdots [y_\genus, z_\genus^{n}])_{n\in \NN}.
\end{equation}
Thus we obtain Theorem \ref{thm:main}.

In Section~\ref{sec:thm3}, we prove Theorem \ref{thm:main3}.
Let $\Gps$ be the fundamental group of a hyperbolic $3$-manifold that fibers over the circle with fiber $\Sigma_{\genus}$.
We note that there exists an inclusion $\iota \colon \G \to \Gps$.
We use a universal circle representation $\overline{\rho} \colon \Gps \to \h$ whose restriction to $\iota(\G)\simeq \G$ is a Fuchsian representation $\rho \colon \G \to \h$.
Let $\mu \colon \Nps \to \RR$ be the $\Gps$-invariant homogeneous quasimorphism defined via the representation $\overline{\rho}$. Take the sequence $(x_n)_{n\in \NN}$ in $[\G,\G']$ defined by \eqref{eq:oreno_x}.
By comparing the restriction $\mu|_{\G'}$ with $q'^* \mu_{\rho_{\genus}}$ (Lemma \ref{lem:comparison}), we can apply the auxiliary lemma: thus we conclude that the sequence $(\iota(x_n))_{n\in \NN}$ fulfills (a) and (b). This proves Theorem~\ref{thm:main3}.

In Section~\ref{sec:remarks}, we make concluding remarks. In the proof of Theorem \ref{thm:main3}, we use the most basic case of the universal circle representation.
In Subsection~\ref{sec:taut_foliation}, we remark a relation between $\Gps$-invariant homogeneous quasimorphisms on $\Nps$ and the universal circle representations of taut foliations on the hyperbolic mapping torus.
More precisely, we prove
that for every non-zero element of $\W(\Gps, \Nps)$, there exist taut foliations on the hyperbolic mapping torus such that it is written as a linear combination of quasimorphisms induced from the universal circle representation of the taut foliations (Proposition~\ref{prop:taut_qm}). In Subsection~\ref{subsec:overdrive}, we state our \emph{overflow argument} (Lemma~\ref{overflow}), which provides a useful sufficient condition to apply the auxiliary lemma (Lemma~\ref{lem:hojo}).
In Subsection \ref{sec:rigid}, we show that the map
\[
  \Phi \colon \Hom(G,\h)_{\basic} \to \QQQ(N)^G / \HHH^1(N;\RR)^G
\]
is \textit{rigid} under semi-conjugacy (Proposition~\ref{prop:rigidity}).

\section{Preliminaries}
Throughout the present paper, we use the following symbol: for  every  $r, s \in \RR$ and $D \geq 0$, we write $r \underset{D}{\sim} s$ to mean $|r-s| \leq D$.

\subsection{Group cohomology and bounded cohomology}\label{subsec:cohom}
For a group $G$ and $n \geq 0$, let $C_n(G)$ be the free $\ZZ[G]$-module on $G^n$ and set $C_{-1}(G) = 0$.
Let $\partial \colon C_n(G) \to C_{n-1}(G)$ be the map defined by
\[
  \partial (g_1, \ldots, g_n) = (g_2, \ldots, g_n) + \sum_{i = 1}^{n-1}(-1)^i (g_1, \ldots, g_i g_{i+1}, \ldots, g_n) + (-1)^n(g_1, \ldots, g_{n-1}).
\]
The homology $\HHH_*(G)$ of the chain complex $(C_*(G),\partial)$ is called the \emph{group homology of} $G$.

Let $\A = \ZZ$ or $\RR$.
The \emph{group cohomology $\HHH^*(G;\A)$ with coefficients in $\A$} is the homology of the dual complex of $(C_*(G),\partial)$.
An explicit cochain complex $(C^*(G;\A);\delta)$ is given by
\[
  C^n(G;\A) = \{ c \colon G^n \to \A \}
\]
and
\[
  \delta c(g_1, \ldots, g_{n+1}) = c(g_2, \ldots, g_{n+1}) + \sum_{i=1}^{n}(-1)^{i}c(g_1, \ldots, g_ig_{i+1}, \ldots, g_{n+1}) + (-1)^{n+1} c(g_1, \ldots, g_n)
\]
for $c \in C^n(G;\A)$ and $g_i \in G$.
The subset $C_b^n(G;\A)$ of all bounded functions defines a subcomplex $(C_b^*(G;\A),\delta)$.
The cohomology $\HHH_b^*(G;\A)$ of this subcomplex is called the \emph{bounded cohomology of $G$ with coefficients in $\A$}.
We note that $\HHH_b^n(G;\RR) = 0$ for all $n \geq 1$ provided that $G$ is amenable (see \cite[Theorem 3.6]{Fr}).
The inclusion $C_b^*(G;\A) \to C^*(G;\A)$ induces a homomorphism $c_G\colon \HHH_b^*(G;\A) \to \HHH^*(G;\A)$ called the \emph{comparison map}.
If $G$ is Gromov-hyperbolic, the comparison map $c_G \colon \HHH_b^*(G;\RR) \to \HHH^*(G;\RR)$ is surjective (\cite{MR919829}).

\begin{remark}\label{rem:normalized}
  A group cocycle $c \in C^n(G;\A)$ is said to be \emph{normalized} if
  \[
    c(g_1, \ldots, g_n) = 0
  \]
  whenever $g_i = 1_G$ for some $i$.
  It is known that every element of $\HHH^n(G;\A)$ (resp. $\HHH_b^n(G;\A)$) can be represented by a normalized cocycle (resp. normalized bounded cocycle).
  (\cite[Section 6]{MR0349792}; see also \cite[Proposition 2.1]{MR4087570}.)
\end{remark}

We note
that a one-cochain $c\in C^1(G;\A)$ is a one-cocycle if and only if it is a homomorphism from $G$ to $A$ by definition,
and hence the first cohomology group $\HHH^1(G;\A)$ is isomorphic to the vector space of $\A$-valued homomorphisms $\Hom(G;\A)$.
The second cohomology group $\HHH^*(G;\A)$ classifies the central $\A$-extensions of $G$ up to isomorphism, that is,
\begin{align}\label{2coh_central_ext}
  \HHH^2(G;\A) \cong \{ \text{central $\A$-extensions of $G$} \}/\{ \text{split extensions}\}
\end{align}
(see \cite[(3.12)Theorem]{brown82} for example).
For a central $\A$-extension $0 \to \A \to E \to G \to 1$, the corresponding cohomology class under \eqref{2coh_central_ext} is called the \emph{Euler class of the central extension $E$} and denoted by $e(E)$.
It is known that the Euler class $e(E)$ is an obstruction to the existence of section homomorphisms $G \to E$.
In fact, the following holds:
\begin{lemma}[see {\cite[Lemma 2.4]{Fr}}]\label{lem:lift_obstruction}
  Let $0 \to \A \to E \xrightarrow{p} G \to 1$ be a central extension, $K$ a group, and $\varphi \colon K \to G$ a group homomorphism.
  Then the pullback $\varphi^*e(E)$ is equal to zero if and only if there exists a homomorphism $\psi \colon K \to E$ such that $p \circ \psi = \varphi$.
\end{lemma}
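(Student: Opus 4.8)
The plan is to identify $\varphi^* e(E)$ with the Euler class of the central extension obtained by pulling $E$ back along $\varphi$, and then to invoke the classification \eqref{2coh_central_ext}. Regard $\A$ as the central subgroup $\Ker(p)$ of $E$, and form the fiber product
\[
\varphi^* E = \{(e, k) \in E \times K \mid p(e) = \varphi(k)\},
\]
which fits into a central extension $0 \to \A \to \varphi^* E \xrightarrow{\bar{p}} K \to 1$ with $\bar{p}(e,k) = k$ and with $\A$ included via $a \mapsto (a, 1_K)$. The first projection $\Phi \colon \varphi^* E \to E$, $(e,k) \mapsto e$, is a homomorphism with $p \circ \Phi = \varphi \circ \bar{p}$.

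First I would check that $e(\varphi^* E) = \varphi^* e(E)$ in $\HHH^2(K;\A)$. Pick a normalized set-theoretic section $s \colon G \to E$ of $p$ (Remark~\ref{rem:normalized}); then $e(E)$ is represented by the normalized cocycle $c$ with $c(g_1, g_2) = s(g_1) s(g_2) s(g_1 g_2)^{-1} \in \A$. The map $\bar{s} \colon K \to \varphi^* E$, $\bar{s}(k) = (s(\varphi(k)), k)$, is a normalized set-theoretic section of $\bar{p}$, and a direct computation shows that the cocycle it determines is $(k_1, k_2) \mapsto c(\varphi(k_1), \varphi(k_2)) = (\varphi^* c)(k_1, k_2)$. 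Hence $e(\varphi^* E) = [\varphi^* c] = \varphi^* e(E)$.

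Next I would translate the vanishing of $e(\varphi^* E)$ into a splitting statement. By \eqref{2coh_central_ext}, $e(\varphi^* E) = 0$ if and only if the extension $\varphi^* E$ splits, i.e., there is a homomorphism $\sigma \colon K \to \varphi^* E$ with $\bar{p} \circ \sigma = \id_K$. If such a $\sigma$ exists, then $\psi := \Phi \circ \sigma \colon K \to E$ satisfies $p \circ \psi = p \circ \Phi \circ \sigma = \varphi \circ \bar{p} \circ \sigma = \varphi$. Conversely, if $\psi \colon K \to E$ is a homomorphism with $p \circ \psi = \varphi$, then $\sigma \colon K \to \varphi^* E$, $\sigma(k) = (\psi(k), k)$, is a well-defined homomorphism (since $p(\psi(k)) = \varphi(k)$) with $\bar{p} \circ \sigma = \id_K$. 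Chaining the equivalences $\varphi^* e(E) = 0 \iff e(\varphi^* E) = 0 \iff \varphi^* E \text{ splits} \iff \exists\, \psi \text{ with } p \circ \psi = \varphi$ gives the lemma.

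The only step that needs genuine care is the functoriality identity $e(\varphi^* E) = \varphi^* e(E)$, where one must keep the normalizations and the choice of section straight; the rest is formal manipulation with fiber products and \eqref{2coh_central_ext}. Since the statement is classical and already recorded in \cite[Lemma 2.4]{Fr}, I would keep the write-up short, citing that reference for the routine verifications one prefers to omit.
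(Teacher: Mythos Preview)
The paper does not give its own proof of this lemma; it merely cites \cite[Lemma~2.4]{Fr}. Your argument via the pullback extension $\varphi^* E$, the identification $e(\varphi^* E)=\varphi^* e(E)$ through compatible sections, and the correspondence \eqref{2coh_central_ext} is correct and is the standard proof, so there is nothing to compare against.
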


A function $f \colon N \to \A$ is said to be \emph{$G$-invariant} if $f(gwg^{-1}) = f(w)$ for every $g \in G$ and $w \in N$.
Let $\HHH^1(N;\A)^G$ denote the vector space of  $G$-invariant $A$-valued homomorphisms.
For an exact sequence of groups $1 \to N \xrightarrow{i} G \xrightarrow{p} \GG \to 1$, the following \emph{five-term exact sequence} holds:
\begin{align}\label{five-term}
  0 \to \HHH^1(\GG;\RR) \xrightarrow{p^*} \HHH^1(G;\RR) \xrightarrow{p^*} \HHH^1(N;\RR)^G \to \HHH^2(\GG;\RR) \xrightarrow{p^*} \HHH^2(G;\RR).
\end{align}

There also exists a five-term exact sequence of bounded cohomology (see \cite[Theorem 12.0.2]{MR1840942}):
\begin{align}\label{bdd_five_term}
  0 \to \HHH_b^2(\GG;\RR) \xrightarrow{p^*} \HHH_b^2(G;\RR) \xrightarrow{i^*} \HHH_b^2(N;\RR)^G \to \HHH_b^3(\GG;\RR) \xrightarrow{p^*} \HHH_b^3(G;\RR).
\end{align}
Here $\HHH_b^2(N;\RR)^G$ is the invariant part of $G$-action on $\HHH_b^2(N;\RR)$ induced from the conjugation $G$-action on the bounded cochain group $C_b^2(N;\RR)$.

\subsection{Invariant quasimorphisms and the Bavard duality theorem for stable mixed commutator lengths}\label{subsec:qm}

A real-valued function $\mu \colon G \to \RR$ on a group $G$ is called a \emph{homogeneous quasimorphism} if there exists a non-negative real number $D$ such that
\[
\mu(gh)\underset{D}{\sim} \mu(g)+\mu(h)
\]
for every $g,h\in G$
and if it is a homomorphism on every cyclic subgroup of $G$.
The minimal value $D(\mu)$ of such $D$ is called the \emph{defect of} $\mu$: $D(\mu)=\sup_{g,h \in G} |\mu(gh) - \mu(g) - \mu(h)|$.
Every homogeneous quasimorphism $\mu \colon G \to \RR$ is $G$-invariant (recall the definition of $G$-invariance from Subsection~\ref{subsec:cohom}), and hence satisfies
\begin{align}\label{comm_defect}
  |\mu([g, h])| \leq D(\mu)
\end{align}
for every $g,h \in G$ (see \cite[Section 2.3.3]{Cal} for example).
Let $\QQQ(G)$ denote the vector space of homogeneous quasimorphisms on $G$.
Clearly $\HHH^1(G;\RR)$ is a subspace of $\QQQ(G)$.

As we mentioned in the introduction, Bavard established in \cite{bavard91} the relation between homogeneous quasimorphisms on $G$ and the stable commutator length $\scl_{G}$, which is called the \emph{Bavard duality theorem}.
\begin{theorem}[{\cite{bavard91}}]\label{thm:Bavard}
  Let $G$ be a group and $y \in [G,G]$.
  Then the following holds:
  \[
    \scl_{G}(y) = \sup_{[\mu] \in \QQQ(G)/\HHH^1(G;\RR)} \frac{|\mu(y)|}{2D(\mu)}.
  \]
\end{theorem}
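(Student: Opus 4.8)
The plan is to prove the two inequalities ``$\ge$'' and ``$\le$'' separately, following Bavard \cite{bavard91} (see also the exposition in \cite{Cal}); the first is an elementary estimate with quasimorphisms, and the second rests on the Hahn--Banach theorem applied to a suitable norm on group $1$-chains.

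For the inequality $\scl_{G}(y)\ge |\mu(y)|/(2D(\mu))$, fix a homogeneous quasimorphism $\mu$ on $G$ and $y\in[G,G]$. For each $m$, write $y^m$ as a product of $k=\cl_{G}(y^m)$ commutators; applying near-additivity $k-1$ times together with \eqref{comm_defect} yields $|\mu(y^m)|\le (2k-1)D(\mu)$, while homogeneity gives $\mu(y^m)=m\mu(y)$. Dividing by $m$ and letting $m\to\infty$ produces $|\mu(y)|\le 2\,\scl_{G}(y)\,D(\mu)$. I would also record here that the quantity $|\mu(y)|/(2D(\mu))$ depends only on the class of $\mu$ in $\QQQ(G)/\HHH^1(G;\RR)$, since every element of $\HHH^1(G;\RR)$ has zero defect and vanishes on $[G,G]\ni y$. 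Thus this direction gives $\scl_{G}(y)\ge \sup_{[\mu]}|\mu(y)|/(2D(\mu))$.

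For the reverse inequality I would pass to the inhomogeneous bar chain complex $C_\ast(G;\RR)$ and work with the real vector space $B_1^H(G)$ of ``homogeneous $1$-boundaries'': the space of $1$-chains that bound, taken modulo the ``homogenization'' relations $(g^n)\sim n\,(g)$. For $y\in[G,G]$ the chain $(y)$ represents a class in $B_1^H(G)$, and I would equip $B_1^H(G)$ with the filling seminorm $\|c\|_B=\inf\{\|b\|_1 : b\in C_2(G;\RR),\ \partial b\equiv c\}$, the infimum of $\ell^1$-norms of fillings. The key geometric input is the identity $\scl_{G}(y)=\tfrac12\|(y)\|_B$: an $\ell^1$-efficient rational $2$-chain filling $n\,(y)$ translates, after clearing denominators and using the homogenization relations to convert powers into products, into an efficient expression of a power of $y$ as a product of commutators (with the genus of the associated surface-with-boundary controlling the commutator count), and conversely. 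Granting this, I would apply Hahn--Banach to extend the functional $\lambda\cdot(y)\mapsto \lambda\|(y)\|_B$ from the line $\RR\cdot(y)$ to a functional $\Phi$ on $B_1^H(G)$ of dual norm $1$ with $\Phi((y))=\|(y)\|_B$. The last step is to identify such functionals with homogeneous quasimorphisms: viewed as a function $G\to\RR$, a dual-norm-one functional $\Phi$ satisfies $|\Phi(g)+\Phi(h)-\Phi(gh)|\le 1$ for all $g,h$ (test $|\Phi(\partial b)|\le\|b\|_1$ against $b=(g,h)$) and is homogeneous because it annihilates the homogenization relations, so $\Phi\in\QQQ(G)$ with $D(\Phi)\le 1$; conversely, every homogeneous quasimorphism $\mu$ restricts to $B_1^H(G)$ with $|\mu(\partial b)|\le D(\mu)\|b\|_1$, hence of dual norm $D(\mu)$. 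Therefore $\|(y)\|_B=\sup_{[\mu]}|\mu(y)|/D(\mu)$, and combining this with $\scl_{G}(y)=\tfrac12\|(y)\|_B$ yields the theorem.

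The main obstacle I anticipate is precisely the chain-level identity $\scl_{G}(y)=\tfrac12\|(y)\|_B$: the Hahn--Banach extension and the dual-space bookkeeping (including the factor of $2$) are formal, but establishing this identity requires the careful ``surface-filling'' dictionary between $\ell^1$-efficient $2$-chains and efficient commutator expressions, and in particular an analysis of how the homogenization relations interact with the boundary operator $\partial$. I would either quote this from \cite{Cal} or reprove it via the explicit gluing argument.
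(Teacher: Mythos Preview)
The paper does not supply a proof of Theorem~\ref{thm:Bavard}; it is quoted from Bavard \cite{bavard91} as a known result and used as a black box. Your proposal is a correct outline of the standard proof (the easy direction via \eqref{comm_defect} plus subadditivity, and the hard direction via the $\ell^1$-filling norm and Hahn--Banach, as in \cite{bavard91} and \cite{Cal}), so there is nothing to compare against here.
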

Note that the right-hand side of the equality in Theorem \ref{thm:Bavard} is regarded as zero when $\QQQ(G) = \HHH^1(G;\RR)$.

Let $G$ be a group, $N$ a normal subgroup of $G$,
and $\QQQ(N)^G$ the vector space of $G$-invariant homogeneous quasimorphisms on $N$.
Note that, by the $G$-invariance of every $\mu$ in $\QQQ(G)$, the restriction $\mu|_{N}$ is contained in $\QQQ(N)^G$.
Kawasaki--Kimura--Matsushita--Mimura proved the following Bavard duality theorem for stable mixed commutator lengths, which connects  $\scl_{G,N}$ and $G$-invariant homogeneous quasimorphisms on $N$.
\begin{theorem}[{\cite[Theorem~1.2]{MR4452430}}]\label{thm:mixedBavard}
  Let $G$ be a group, $N$ a normal subgroup, and $x \in [G,N]$.
  Then the following equality
  \[
    \scl_{G,N}(x) = \sup_{[\mu] \in \QQQ(N)^G/\HHH^1(N;\RR)^G} \frac{|\mu(x)|}{2D(\mu)}.
  \]
holds true.
\end{theorem}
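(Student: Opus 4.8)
The plan is to follow the template of Bavard's proof of Theorem~\ref{thm:Bavard} (\cite{bavard91}; see also \cite{Cal}), carried out in the category of real-valued functions on $N$ that are invariant under the conjugation action of $G$. One inequality is an elementary defect estimate; the reverse one requires a Hahn--Banach duality argument, now with $G$-invariance built into the relevant spaces.

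For ``$\scl_{G,N}(x) \ge \sup(\cdots)$'', fix $\mu \in \QQQ(N)^G$. For $g \in G$ and $w \in N$ one has $gwg^{-1} \in N$ by normality, $\mu(gwg^{-1}) = \mu(w)$ by $G$-invariance, and $\mu(w^{-1}) = -\mu(w)$ by homogeneity, so $|\mu([g,w])| \le D(\mu)$; iterating quasi-additivity, $\mu$ maps a product of $k$ single mixed commutators into the interval of radius $(2k-1)D(\mu)$ about $0$. Applying this to $x^m$ with $k = \cl_{G,N}(x^m)$ and using $\mu(x^m) = m\mu(x)$ gives $m|\mu(x)| \le 2\cl_{G,N}(x^m)D(\mu)$; dividing by $m$ and letting $m \to \infty$ yields $|\mu(x)| \le 2\scl_{G,N}(x)D(\mu)$. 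Since a $G$-invariant homomorphism $N \to \RR$ vanishes on $[G,N]$ and does not change the defect, the quantity $|\mu(x)|/2D(\mu)$ depends only on $[\mu] \in \QQQ(N)^G/\HHH^1(N;\RR)^G$, and taking the supremum gives the inequality.

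For the reverse inequality I would realize $\scl_{G,N}$ as one half of a \emph{mixed filling seminorm} on a suitable real chain space and then dualize. Concretely: take a $G$-invariant ``mixed'' analogue $\overline{V}$ of Calegari's homogenized $1$-boundary space---roughly, the free real vector space on $N$ modulo the homogenization relations $w^n \sim nw$ and the $G$-conjugation relations ${}^g w \sim w$---equipped with the $\ell^1$ filling seminorm $\|\cdot\|$ measuring how efficiently the class of a given element can be written as a boundary of degree-two chains of $G$ whose faces are accounted for by single mixed commutators. The two points to verify are: (i) $\scl_{G,N}(x) = \tfrac12\|\widehat{x}\|$ for the class $\widehat{x}$ of $x$, the passage from the integer-valued $\cl_{G,N}$ to the real filling seminorm becoming an equality after stabilization just as in the classical case; and (ii) the continuous dual of $(\overline{V}, \|\cdot\|)$ is $\QQQ(N)^G/\HHH^1(N;\RR)^G$ with the defect norm---a functional on $\overline{V}$ is a $G$-invariant function on $N$ that is homogeneous on cyclic subgroups, it is bounded for $\|\cdot\|$ precisely when it is a quasimorphism, and its operator norm is a fixed multiple of its defect. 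Granting (i) and (ii), Hahn--Banach applied to $\widehat{x}$ in the seminormed space $\overline{V}$ produces, for each $\varepsilon > 0$, a functional of operator norm at most $1$ pairing with $\widehat{x}$ to within $\varepsilon$ of $\|\widehat{x}\|$; rescaling, this is a $\mu \in \QQQ(N)^G$ with $|\mu(x)|/2D(\mu) \ge \scl_{G,N}(x) - \varepsilon$, and $\varepsilon \to 0$ finishes the proof.

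The crux---and where I expect the real work to lie---is arranging $\overline{V}$ and its seminorm so that the dual is \emph{exactly} $\QQQ(N)^G/\HHH^1(N;\RR)^G$: the $G$-action is only by conjugation and $G/N$ need not be amenable, so $G$-invariance cannot be obtained by averaging after the fact but must be forced into $\overline{V}$ through the relations ${}^g w \sim w$; and because the elements $g_i$ in a mixed commutator $[g_i,w_i]$ need not lie in $N$, one is effectively filling in $G$ while the dual lives on $N$, so the contribution of the $g_i$ to the boundary of a filling chain must be made to cancel. A cleaner packaging of the same content is cohomological: establish the exact sequence $0 \to \HHH^1(N;\RR)^G \to \QQQ(N)^G \to \HHH_b^2(N;\RR)^G \to \HHH^2(N;\RR)$, the $G$-invariant analogue of the exact sequence of quasimorphisms and compatible with the defect versus the sup-norm---for which the five-term sequences \eqref{five-term} and \eqref{bdd_five_term} are the natural input---then identify $\scl_{G,N}$ with a $G$-coinvariant $\ell^1$ filling seminorm on the first homology of $N$ (note that $x \in [G,N]$ becomes null-homologous only after passing to $G$-coinvariants $N^{\mathrm{ab}}/\langle {}^g w - w\rangle$), and finally invoke the duality between $G$-invariant bounded cohomology of $N$ and $G$-coinvariant $\ell^1$-homology of $N$.
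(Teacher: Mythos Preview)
The paper does not prove Theorem~\ref{thm:mixedBavard}; it is quoted from \cite[Theorem~1.2]{2007.02257} and used as a black box. In fact only the easy inequality $\scl_{G,N}(x) \ge |\mu(x)|/2D(\mu)$ is ever invoked, in the proof of Lemma~\ref{lem:hojo}, where this is explicitly flagged as ``the easy direction''. There is therefore no in-paper proof to compare your proposal against.

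On its own merits: your argument for the inequality $\scl_{G,N} \ge \sup(\cdots)$ is correct and complete, and is exactly the part the paper actually needs. Your outline for the reverse inequality identifies the right strategy---a Hahn--Banach duality after realizing $\scl_{G,N}$ as half of a seminorm on a $G$-coinvariant chain space---and you correctly flag the two genuine obstacles: that $G$-invariance must be built into the space rather than obtained by averaging (since $G/N$ need not be amenable), and that the $g_i$ in a mixed commutator $[g_i,w_i]$ lie in $G$ rather than $N$, so one is filling in $G$ while dualizing on $N$. What remains only a sketch is the actual construction of $\overline{V}$ and the verification that its continuous dual is precisely $\QQQ(N)^G/\HHH^1(N;\RR)^G$ with operator norm equal to the defect; this is where the substantive work of \cite{2007.02257} lies, and your proposal points at it without carrying it out.
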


\subsection{The group of orientation preserving homeomorphisms of the circle}\label{subsec:poincare}

Recall from Subsection~\ref{subsec:outline} that we have set $\h = \Homeo_+(S^1)$, the group of orientation preserving homeomorphisms of the circle $S^1$.
We regard $S^1$ as $\RR/\ZZ$ throughout this paper.
For every $r \in \RR$, let $T_r \colon \RR \to \RR$ be the homeomorphism defined by $T_r(x) = x + r$ for $x \in \RR$.
We set
\[
  \th = \{ \tf \colon \RR \to \RR \mid \tf \circ T_1 = T_1 \circ \tf \}.
\]
Let $p \colon \th \to \h$ be the canonical projection.
This projection gives rise to a central $\ZZ$-extension
\begin{align}\label{univ_ext}
  0 \to \ZZ \to \th \xrightarrow{\pi} \h \to 1.
\end{align}

Eisenbud, Hirsch, and Neumann \cite{MR656217} completely determined the commutator lengths of the elements of $\th$ as follows:
For every $\tf \in \th$, we set
\[
  \underline{m}(\tf) = \min_{x \in \RR} (\tf(x) - x) \ \text{ and } \ \overline{m}(\tf) = \max_{x \in \RR} (\tf(x) - x).
\]
\begin{theorem}[{\cite{MR656217}}]\label{thm:ehn}
  Let $\tf$ be an element of $\th$. Let $n\geq 1$ be an integer.
Then, the following two conditions on $\tf$ are equivalent:
\begin{enumerate}[$(1)$]
  \item $\cl_{\th}(\tf)\leq n$.
  \item $\underline{m}(\tf) < 2n - 1$ and $\overline{m}(\tf) > 1 - 2n$.
\end{enumerate}

In particular, $\tf$ may be expressed as a single commutator of $\th$ if both of the inequalities $\underline{m}(\tf)<1$ and $\overline{m}(\tf) >-1$ hold.
\end{theorem}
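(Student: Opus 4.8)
The plan is to prove that $(1)$ and $(2)$ are equivalent by establishing each implication by induction on $n$; the concluding ``in particular'' clause is then exactly the base case $n=1$ of the implication $(2)\Rightarrow(1)$. I would begin with three elementary observations, writing $d_{\tf}(x)=\tf(x)-x$ for the continuous $1$-periodic displacement function of $\tf\in\th$. First, $\underline{m}$ is superadditive, $\overline{m}$ subadditive, $\underline{m}(\tf^{-1})=-\overline{m}(\tf)$, the element $T_1$ is central in $\th$, and $\underline{m}(\tf)\le\rot(\tf)\le\overline{m}(\tf)$ with $\rot$, the Poincar\'e translation number, conjugation-invariant. Second, since $\tf$ is increasing and commutes with $T_1$ we have $|d_{\tf}(x)-d_{\tf}(y)|<1$ whenever $|x-y|<1$; applying this to a maximizer and a minimizer of $d_{\tf}$ taken in a common period yields $\overline{m}(\tf)-\underline{m}(\tf)<1$ for every $\tf\in\th$. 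Third, since $d_{\tf}$ is continuous and $1$-periodic, condition~$(2)$ is equivalent to the existence of a single point $x_0$ with $|\tf(x_0)-x_0|<2n-1$. Finally, replacing $\tf$ by $\tf^{-1}$ preserves $\cl_{\th}$ and swaps the two inequalities in~$(2)$, so I may break the symmetry between them freely.

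For $(1)\Rightarrow(2)$, I would prove by induction on $n$ that a product $\tf$ of $n$ single commutators satisfies $\underline{m}(\tf)<2n-1$; applying this to $\tf^{-1}$ then gives $\overline{m}(\tf)>1-2n$. For $n=1$: if $[\tg,\widetilde{h}]\ge T_1$ pointwise, then $\tg\widetilde{h}\tg^{-1}\ge T_1\widetilde{h}$ pointwise, hence $\tg^{m}\widetilde{h}\tg^{-m}\ge T_m\widetilde{h}$ pointwise for all $m$, so $\underline{m}(\tg^{m}\widetilde{h}\tg^{-m})\ge m+\underline{m}(\widetilde{h})\to\infty$; this contradicts $\underline{m}(\tg^{m}\widetilde{h}\tg^{-m})\le\rot(\tg^{m}\widetilde{h}\tg^{-m})=\rot(\widetilde{h})<\infty$, so $\underline{m}([\tg,\widetilde{h}])<1$. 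For $n\ge2$: write $\tf=c_1\tf'$ with $c_1$ a single commutator and $\tf'$ a product of $n-1$ single commutators, and pick $a$ with $\tf'(a)-a<2n-3$, possible by the induction hypothesis. If $\underline{m}(\tf)\ge2n-1$, then $c_1(\tf'(a))=\tf(a)\ge a+2n-1$, so the displacement of $c_1$ at the point $\tf'(a)$ exceeds $(a+2n-1)-(a+2n-3)=2$; but $\underline{m}(c_1)<1$ (the case $n=1$) together with the second observation give $\overline{m}(c_1)<\underline{m}(c_1)+1<2$, a contradiction.

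For $(2)\Rightarrow(1)$, the harder direction, I would again induct on $n$, treating the base case $n=1$ separately at the end. For the inductive step ($n\ge2$): the second observation shows that ``$\underline{m}(\tf)\ge2n-3$'' and ``$\overline{m}(\tf)\le3-2n$'' cannot both hold, so either $\tf$ already satisfies~$(2)$ with $n-1$ in place of $n$, in which case the induction hypothesis applies directly, or, say, $\underline{m}(\tf)\ge2n-3$. In the latter case I would peel off a single commutator $c$ chosen with $\overline{m}(c)$ slightly below $2$ and with its maximal displacement located so that $\underline{m}(c^{-1}\tf)<2n-3$; such a $c$ is indeed a single commutator by the base case, because $\overline{m}(c)<2$ is compatible with $\underline{m}(c)<1$ by the second observation, and moreover $\overline{m}(c^{-1}\tf)\ge\overline{m}(\tf)-\overline{m}(c)>3-2n$ once $n\ge2$, so $c^{-1}\tf$ satisfies~$(2)$ for $n-1$; applying the induction hypothesis to $c^{-1}\tf$ and prepending $c$ completes the step (the case $\overline{m}(\tf)\le3-2n$ is symmetric under $\tf\mapsto\tf^{-1}$). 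It then remains to prove the base case $n=1$: given $\tf$ with $\underline{m}(\tf)<1$ and $\overline{m}(\tf)>-1$, realize $\tf$ as a single commutator. Using that $\tf=[\tg,\widetilde{h}]$ for some $\tg$ precisely when $\tf\widetilde{h}$ is conjugate to $\widetilde{h}$ in $\th$, I would build a suitable $\widetilde{h}$ by hand: after conjugating one may assume $0\le\tf(0)<1$; when $\tf$ has a fixed point, one reduces to the uniform perfectness of the homeomorphism group of an interval by an ``infinite swindle'' and lifts the resulting commutator to $\th$ by $T_1$-equivariance, and the remaining cases (an integral translation twist, or irrational rotation number) are handled by variants of this swindle that keep track of the central $\ZZ$.

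The step I expect to be the main obstacle is exactly this base case of the construction direction: exhibiting a prescribed $\tf\in\th$ with a point of small displacement as an \emph{honest} single commutator. That is the one place where a genuine dynamical construction --- the swindle, with careful bookkeeping of the central $\ZZ$-twist --- is needed; the remaining ingredients, namely the three observations, the inequality direction, and the inductive reduction above, are elementary.
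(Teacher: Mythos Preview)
The paper does not prove this theorem at all: Theorem~\ref{thm:ehn} is stated as a result of Eisenbud--Hirsch--Neumann and cited from \cite{MR656217}, with no argument given. There is therefore no ``paper's own proof'' to compare your proposal against; the paper only \emph{uses} the theorem (specifically, the $n=1$ case of $(2)\Rightarrow(1)$, to produce $\tf,\tg\in\th$ with $[\tf,\tg]=T_{(\ell-1)/\ell}$).

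As for your outline itself: the easy direction $(1)\Rightarrow(2)$ and the inductive reduction of $(2)\Rightarrow(1)$ to its base case are reasonable, and your three elementary observations are correct. But you yourself identify the genuine content as the base case $n=1$ of $(2)\Rightarrow(1)$, and there your sketch is not a proof. The fixed-point case is fine (extend a commutator on a fundamental interval $T_1$-equivariantly), but the phrase ``the remaining cases \ldots\ are handled by variants of this swindle that keep track of the central $\ZZ$'' does not supply the construction; when $\rot(\tf)\in(-1,1)\setminus\{0\}$ there is no fixed point and no obvious interval on which to run an infinite-swindle, and the bookkeeping of the central $\ZZ$ is exactly the nontrivial part of the Eisenbud--Hirsch--Neumann argument. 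So your proposal correctly isolates where the work lies, but does not carry it out; since the paper itself defers to \cite{MR656217} for this, that is also where you should look for the actual construction.
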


Poincar\'{e} \cite{poincare81} introduced a homogeneous quasimorphism $\tau \in \QQQ(\th)$ called the \emph{translation number}.
The translation number $\tau \in \QQQ(\th)$ is defined by
\[
  \tau(\tf) = \lim_{n \to \infty} \frac{\tf^n(0)}{n}.
\]
This limit exists and defines a homogeneous quasimorphism with defect $D(\tau) = 1$ (see \cite[Lemma 2.40 and Proposition 2.92]{Cal}).

In \cite{MR848896}, Matsumoto introduced the \emph{canonical Euler cocycle} $\chi \in C^2(\h;\RR)$.
The cocycle $\chi$ is defined by
\begin{align}\label{can_eu_def}
  \chi(f,g) = \tau(\tf\widetilde{g}) - \tau(\tf) - \tau(\widetilde{g}),
\end{align}
where $\tf$ and $\widetilde{g}$ in $\th$ are arbitrary lifts of $f$ and $g$ in $\h$, respectively.
Since $\tau$ is a homogeneous quasimorphism, the right-hand side of \eqref{can_eu_def} also defines a \emph{bounded} cocycle $\chi_b \in C_b^2(\h;\RR)$, which is called the \emph{canonical bounded Euler cocycle}.

It is known that the cohomology class $[\chi] \in \HHH^2(\h;\RR)$ corresponds to the Euler class $e_{\ZZ} = e(\th) \in \HHH^2(\h;\ZZ)$ of central extension \eqref{univ_ext} under the change of coefficients homomorphism $\HHH^2(\h;\ZZ) \to \HHH^2(\h;\RR)$ (see \cite[Proposition 10.26]{Fr} for example).
We set $e_{\RR} = [\chi] \in \HHH^2(\h;\RR)$ and call it the \emph{real Euler class of $\h$}.

Now we construct a central $\RR$-extension $\h$ which corresponds to the \emph{real} Euler class of $\h$.
The group $\thr$ is defined as a quotient $(\th \times \RR)/\sim$ with the equivalence relation $(\tf, r) \sim (\tf \circ T_1, r-1)$ for $(\tf,r) \in \h \times \RR$.
The multiplication in $\thr$ is induced from $(\tf, r) \cdot (\widetilde{g}, s) = (\tf\widetilde{g}, r + s)$.
By abuse of notation, $(\tf, r)$ also denotes the element of $\thr$ represented by $(\tf, r) \in \th \times \RR$.
This group $\thr$ gives rise to a central $\RR$-extension
\begin{align}\label{cent_ext_r}
  0 \to \RR \xrightarrow{i} \thr \xrightarrow{\pi} \h \to 1,
\end{align}
where $i(r) = (\id_{\RR}, r) \in \thr$ and $\pi((\tf, s)) = \pi(\tf)$ for every $r \in \RR$ and $(\tf, s) \in \thr$.
It is verified that central $\RR$-extension \eqref{cent_ext_r} corresponds to the real Euler class $e_{\RR} \in \HHH^2(\h;\RR)$ (see \cite[Remark 1]{Moriyoshi16} for example).

Let us define a map $\taur \colon \thr \to \RR$ by
\begin{align}\label{real_transl_number}
  \taur((\tf, r)) = \tau(\tf) + r.
\end{align}
This $\taur$ is well-defined since $\tau(\tf \circ T_1) = \tau(\tf) + 1$.
Moreover, this is a homogeneous quasimorphism since
\[
  \taur((\tf, r)^n) = \taur((\tf^n, nr)) = \tau(\tf^n) + rn = n\cdot \taur((\tf, r))
\]
and
\[
  |\taur((\tf, r)\cdot(\tg, s)) - \taur((\tf, r)) - \taur((\tg, s))|
  = |\tau(\tf \tg) - \tau(\tf) - \tau(\tf)| \leq D(\tau) = 1
\]
for every $(\tf, r), (\tg, s) \in \thr$ and for every integer $n$.
In particular, we have $D(\taur) = D(\tau) = 1$.
Note that for every $f, g \in \h$, the equality
\begin{align}\label{can_euler_real_transl}
  \chi(f,g) = \chi_b(f, g) = \taur((\tf, r)\cdot(\tg, s)) - \taur((\tf, r)) - \taur((\tg, s))
\end{align}
holds, where $(\tf, r), (\tg, s) \in \thr$ are lifts of $f, g$, respectively.
Note also that every homogeneous quasimorphism $\mu$ satisfies
\begin{align}\label{hqm_commutes}
  \mu(xy) = \mu(x) + \mu(y)
\end{align}
for elements $x$ and $y$ with $xy = yx$, and so does $\taur$.

\section{Construction of invariant homogeneous quasimorphisms}\label{sec:construction}

Let $G$ be a group and $N$ a normal subgroup.
In this section, we construct a $G$-invariant homogeneous quasimorphism on $N$. Set $\Gamma=G/N$.

\begin{lemma}\label{lem:construction}
  Let $G$ be a group, $N$ a normal subgroup, $i \colon N \to G$ the inclusion, and $p \colon G \to \Gamma$ the projection. Set $\Gamma=G/N$.
  Let $c_b \in C_b^2(G;\RR)$ be a bounded two-cocycle satisfying the following two conditions.
  \begin{enumerate}[$(1)$]
    \item There exists a homogeneous quasimorphism $\mu \in \QQQ(N)$ such that $i^* c_b = \delta \mu$.
    \item There exist a normalized cocycle $A \in C^2(\Gamma;\RR)$ and a cochain $u \in C^1(G;\RR)$ such that $c_b - p^*A = \delta u$.
  \end{enumerate}
  Then the restriction $u|_{N} \colon N \to \RR$ is a $G$-invariant homogeneous quasimorphism.
  This quasimorphism $u|_N$ is written as $u|_N = \mu + h$ for some homomorphism $h \in \HHH^1(N;\RR)$.
\end{lemma}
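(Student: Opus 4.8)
The plan is to peel off the $\Gamma$-part of $c_b$ using the normalization of $A$, which will simultaneously give the homogeneity (via the decomposition $u|_N=\mu+h$) and set up a clean boundedness argument for $G$-invariance.

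\textbf{Step 1: restricting condition (2) to $N$.} The composite $p\circ i\colon N\to\Gamma$ is the trivial homomorphism, so $i^*p^*A=(p\circ i)^*A$, and since $A$ is normalized this vanishes: $(p\circ i)^*A(n_1,n_2)=A(1_\Gamma,1_\Gamma)=0$. Applying $i^*$ to $c_b-p^*A=\delta u$ and using that pullback commutes with $\delta$ therefore gives $\delta(u|_N)=i^*c_b$. Combined with condition (1), $\delta(u|_N)=i^*c_b=\delta\mu$, so $u|_N-\mu$ is a $1$-cocycle on $N$, i.e.\ a homomorphism $h\in\Hom(N;\RR)=\HHH^1(N;\RR)$ (Subsection~\ref{subsec:cohom}). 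Thus $u|_N=\mu+h$; as $\mu$ is a homogeneous quasimorphism and $h$ is a homomorphism (hence a homogeneous quasimorphism of defect $0$), $u|_N$ is a homogeneous quasimorphism. This already proves everything except $G$-invariance.

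\textbf{Step 2: $G$-invariance via boundedness.} Fix $g\in G$. Applying the identity $u(xy)=u(x)+u(y)-\delta u(x,y)$ first to $(x,y)=(g,n)$ and then to $(x,y)=(gn,g^{-1})$, one gets, for every $n\in N$,
\[
u(gng^{-1})-u(n)=\bigl(u(g)+u(g^{-1})\bigr)-\delta u(g,n)-\delta u(gn,g^{-1}).
\]
Substituting $\delta u=c_b-p^*A$ and using $p(n)=1_\Gamma$ together with the normalization of $A$, the $A$-terms become $A(p(g),p(n))=0$ and $A(p(gn),p(g^{-1}))=A(p(g),p(g^{-1}))$, an $n$-independent constant; meanwhile $|c_b(g,n)|$ and $|c_b(gn,g^{-1})|$ are both bounded by $\|c_b\|_\infty$. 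Hence $|u(gng^{-1})-u(n)|$ is bounded by a constant depending only on $g$, so the function $n\mapsto u(gng^{-1})-u(n)$ is bounded on $N$.

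\textbf{Step 3: conclude.} Both $u|_N$ and its conjugate $n\mapsto u(gng^{-1})$ are homogeneous quasimorphisms on $N$ — the latter because it is the composition of $u|_N$ with the automorphism of $N$ given by conjugation by $g$. Their difference is therefore a homogeneous quasimorphism on $N$, and by Step 2 it is bounded, hence identically zero (a bounded homogeneous quasimorphism vanishes). So $u(gng^{-1})=u(n)$ for all $g\in G$ and $n\in N$, i.e.\ $u|_N$ is $G$-invariant. The only part that requires a bit of care — and the main (minor) obstacle — is Step 2: one must prove $G$-invariance by showing the conjugation difference is merely \emph{bounded} and then invoke homogeneity, rather than attempting to kill it at the cochain level; everything else is routine cochain bookkeeping.
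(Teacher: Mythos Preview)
Your proof is correct and follows essentially the same approach as the paper: both restrict condition~(2) to $N$ to get $u|_N=\mu+h$, then compute $u({}^gw)-u(w)$ via the cocycle identity, use the normalization of $A$ to kill the $\Gamma$-dependence, and invoke the boundedness of $c_b$ together with homogeneity to force the conjugation difference to vanish. The only cosmetic difference is that the paper chooses its three cochain evaluations so that the $A$-terms and the $u(g^{\pm1})$ terms cancel exactly (yielding $u(g^{-1}wg)-u(w)=c_b(g^{-1},g)-c_b(w,g)-c_b(g^{-1},wg)$), and then substitutes $w\mapsto w^n$ directly, whereas you carry an $n$-independent constant and appeal to the principle that a bounded homogeneous quasimorphism vanishes; these are equivalent.
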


\begin{proof}
  By condition~(1), we have
  \[
    \delta (u|_N) = \delta (i^* u) = i^*(\delta u) = i^* c_b = \delta \mu.
  \]
  Hence there exists a one-cocycle $h \in C^1(N;\RR)$ such that $u|_N = \mu + h$.
  Since every one-cocycle is a homomorphism, $u|_N$ is a homogeneous quasimorphism on $N$.

  Now we show the $G$-invariance of $u|_N$.
  For every $g \in G$ and $w \in N$, we consider group two-chains $(g^{-1}, wg)$, $(w,g)$, and $(g^{-1}, g)$ in $C_2(G)$.
  Substituting them for $c_b - p^*A = \delta u$ by condition~(2), we obtain
  \begin{align}\label{eq1}
    u(g^{-1}wg) &= u(wg) + u(g^{-1}) - c_b(g^{-1}, wg) + p^*A(g^{-1}, wg), \nonumber \\
    u(wg) &= u(g) + u(w) - c_b(w, g) + p^*A(w, g),  \\
    0 = u(1_G) &= u(g) + u(g^{-1}) - c_b(g^{-1}, g) + p^*A(g^{-1}, g) \nonumber.
  \end{align}
  Since $N = \Ker (p)$ and $A$ is normalized, we have
  \begin{align}\label{eq2}
    p^*A(g^{-1}, wg) = p^*A(g^{-1}, g) \ \text{ and } \ p^*A(w, g) = 0.
  \end{align}
  Equalities \eqref{eq1} and \eqref{eq2} imply
  \begin{align*}
    u(g^{-1}wg) - u(w) = c_b(g^{-1}, g) - c_b(w, g) - c_b(g^{-1}, wg).
  \end{align*}
  Since $w \in N$ is arbitrary and $u|_N$ is homogeneous, we have
  \begin{align}\label{eq3}
    n|u(g^{-1}wg) - u(w)| = |u(g^{-1} w^n g) - u(w^n)| = |c_b(g^{-1}, g) - c_b(w^n, g) - c_b(g^{-1}, w^ng)|
  \end{align}
  for every $n \in \ZZ$.
  Since $c_b$ is a bounded cochain, the very right-hand side of \eqref{eq3} is uniformly bounded.
  This implies that $u(g^{-1}wg) - u(w) = 0$.
\end{proof}

\begin{remark}\label{rem:unique_as_class}
  As an element of $\QQQ(N)^G/(\HHH^1(N;\RR)^G + i^*\QQQ(G))$, the resulting $G$-invariant homogeneous quasimorphism $u|_{N}$ in Lemma \ref{lem:construction} depends only on the class $c_G([c_b]) \in \HHH^2(G;\RR)$.
  To see this, let $d_b \in C_b^2(G;\RR)$ be a bounded two-cocycle satisfying the equality $c_G([d_b]) = c_G([c_b]) \in \HHH^2(G;\RR)$.
  We take $\nu \in \QQQ(N), B \in C^2(\Gamma;\RR)$, and $v \in C^1(G;\RR)$ satisfying conditions (i) and (ii) in Lemma \ref{lem:construction}.
  Since $c_G([c_b - d_b]) = 0$, there exist a homogeneous quasimorphism $f \in \QQQ(G)$ and a bounded cochain $z \in C_b^1(G;\RR)$ such that $\delta f + \delta z = c_b - d_b$.
  Then we have
  \begin{align*}
    \delta(u|_N - v|_N) = i^* (\delta u - \delta v) = i^*(c_b - d_b) = \delta(i^*f + i^*z).
  \end{align*}
  Hence the difference $(u|_N - v|_N) - (i^*f + i^*z)$ is a homomorphism from $N$ to $\RR$.
  In particular, the boundedness of $z$ implies that $i^*z$ is the zero map.
  Therefore, $u|_N - v|_N$ belongs to $\HHH^1(N;\RR)^G + i^*\QQQ(G)$.
\end{remark}

\begin{lemma}\label{lem:non-ext}
  Under the setting in Lemma $\ref{lem:construction}$, we further assume that $\HHH_b^2(\Gamma;\RR) = 0$ and the class $c_G([c_b]) \in \HHH^2(G;\RR)$ is non-zero.
  Then, the resulting $G$-invariant homogeneous quasimorphism $u|_{N}$ is non-zero in $\W(G,N)$.
\end{lemma}

\begin{proof}
  Recall that $\W(G,N) = \QQQ(N)^G/(\HHH^1(N)^G + i^*\QQQ(G))$.
  Assume for contradiction that $u|_{N}$ is equal to zero in $\W(G,N)$.
  Then, there exist an element $h' \in \HHH^1(N;\RR)^G$ and an element $\mu' \in \QQQ(G)$ such that $u|_{N} = h' + i^*\mu'$,
  and hence we have
  \begin{align*}
    i^*(\delta \mu') = i^*(\delta u) = i^*c_b.
  \end{align*}
  Since $\HHH_b^2(\Gamma;\RR) = 0$, the homomorphism $i^* \colon \HHH_b^2(G;\RR) \to \HHH_b^2(N;\RR)$ is injective by \eqref{bdd_five_term}.
  Hence the bounded cohomology class $[c_b] \in \HHH_b^2(G;\RR)$ coincides with $[\delta \mu]$.
  This implies that $c_G([\delta \mu]) = 0$, which contradicts the assumption.
\end{proof}

Lemma \ref{lem:construction} greatly works when the group $G$ has a representation $\rho \colon G \to \h (=\Homeo_+(S^1))$, and in this case the resulting $G$-invariant homogeneous quasimorphism has a nice formula.

Recall that the real Euler class $e_{\RR} \in \HHH^2(\h;\RR)$ is the Euler class of extension \eqref{cent_ext_r}.
We now construct $G$-invariant homogeneous quasimorphisms $\nu_{\rho, A, u}$ in the introduction.
\begin{construction}\label{construction}
  Let $G$ be a group and $N$ its normal subgroup. Set the group quotient $\Gamma=G/N$ and the projection $p \colon G \to \Gamma$.  Let $\rho \colon G \to \h$ be a homomorphism.
  Assume that the pullback $\rho^*e_{\RR}$ is contained in the image of the map $p^* \colon \HHH^2(\Gamma;\RR) \to \HHH^2(G;\RR)$.

  Since $\rho^*e_{\RR}$ is contained in the image of $p^* \colon \HHH^2(\Gamma;\RR) \to \HHH^2(G;\RR)$, we have that $(\rho \circ i)^*e_{\RR} = 0$.
  Hence, by Lemma \ref{lem:lift_obstruction}, there exists a homomorphism $\trho \colon N \to \thr$ such that the following commutes:
  \begin{align}\label{comm_diag_gen}
    \xymatrix{
    N \ar[r]^-{\trho} \ar[d]^-{i} & \thr \ar[d]^-{\pi} \\
    G \ar[r]^-{\rho} & \h.
    }
  \end{align}
  Recall that $\chi_b \in C_b^2(\h;\RR)$ denotes the canonical bounded Euler cocycle.
  The cocycle $\rho^*\chi_b$ satisfies condition~(1) in Lemma \ref{lem:construction} because
  \begin{align*}
    i^* \rho^* \chi_b = \trho^* \pi^* \chi_b = \trho^* (-\delta \taur) = \delta (-\trho^* \taur)
  \end{align*}
  by \eqref{can_euler_real_transl} and \eqref{comm_diag_gen}.
  Remark \ref{rem:normalized}, together with the assumption, implies that condition~(2) in Lemma \ref{lem:construction} holds, that is, there exist a normalized cocycle $A \in C^2(\Gamma;\RR)$ and a cochain $u \in C^1(G;\RR)$ such that $\rho^* \chi_b - p^*A = \delta u$.
  We set $\nu_{\rho, A, u} = u|_N$.

  By Lemma \ref{lem:construction}, $\nu_{\rho, A, u}$ is a $G$-invariant homogeneous quasimorphism on $N$ and $\nu_{\rho, A, u} = -\trho^*\taur + h$ for some homomorphism $h \in \HHH^1(N;\RR)$.
  Moreover, since $D(\taur) = 1$, we have $D(\nu_{\rho, A, u}) \leq 1$.
\end{construction}

\begin{remark}\label{rem:indep}
  The homogeneous quasimorphism $\nu_{\rho,A,u}$ in Construction \ref{construction} depends on $A$ and $u$.
  However, if $N$ is contained in $G'$, then for two choices $(A_1,u_1)$ and $(A_2,u_2)$ of $A$ and $u$ the difference $\nu_{\rho,A_1,u_1}-\nu_{\rho,A_2,u_2}$ lies in $\HHH^1(N;\mathbb{R})^G$.
  Therefore, under this additional assumption, the class $[\mu_{\rho}]$ in $\QQQ(N)^G/\HHH^1(N;\mathbb{R})^G$ is determined by $\rho$ and does not depend on the choice of $A$ or $u$.
  In other words, we obtain a well-defined map
  \[
    \Phi \colon \Hom(G, \h)_{\basic} \to \QQQ(N)^G/\HHH^1(N;\mathbb{R})^G,
  \]
  where $\Hom(G, \h)_{\basic}$ is the space defined in Subsection~\ref{subsec:intro_inv_qm}.
  We will see in Section \ref{sec:remarks} that this map $\Phi$ is rigid under semi-conjugacy.
\end{remark}

\begin{proof}[Proof of Theorem $\ref{thm:intro_non-ext}$]
  This is immediately from Lemma \ref{lem:non-ext}.
\end{proof}

As we mentioned in the introduction, we furthermore provide the following explicit formula on the resulting quasimorphisms in terms of the Poincar\'{e} translation number.

\begin{theorem}[explicit formula]\label{thm:formula}
Let $G$ be a group and $N$ its normal subgroup. Set the group quotient $\Gamma=G/N$ and the projection $p \colon G \to \Gamma$.  Let $\rho \colon G \to \h$ be a homomorphism.
Assume that the pullback $\rho^*e_{\RR}$ is contained in the image of the map $p^* \colon \HHH^2(\Gamma;\RR) \to \HHH^2(G;\RR)$.
For $A$ and $u$ with $\rho^* \chi_b - p^*A = \delta u$, we set $\mu_{\rho} = \nu_{\rho, A, u}$.
Then the following hold true.
\begin{enumerate}[$(1)$]
  \item $\mu_{\rho}$ is a $G$-invariant homogeneous quasimorphism of defect $D(\mu_{\rho}) \leq 1$.
  \item Let $k$ be a positive integer.
        Then for every $g_1, \ldots, g_k \in G$ and $w_1, \ldots, w_k \in N$,
    \[
      \mu_{\rho}([g_1, w_1] \cdots [g_k, w_k]) = -\taur\left([\widetilde{\rho(g_1)}, \widetilde{\rho(w_1)}] \cdots [\widetilde{\rho(g_k)}, \widetilde{\rho(w_k)}]\right).
    \]
    Here $\taur \colon \th \to \RR$ is the map defined by \eqref{real_transl_number} and $\widetilde{\rho(g_i)}, \widetilde{\rho(w_i)} \in \thr$ are lifts of $\rho(g_i), \rho(w_i) \in \h$, respectively, for every $i=1,\ldots ,k$.
  \end{enumerate}
\end{theorem}


\begin{proof}
  Condition (1) is checked in Construction \ref{construction}.
  We show (2).
  For $g_1, \ldots, g_k \in G$ and $w_1, \ldots, w_k \in N$, we define a group two-chain $\sigma \in C_2(G)$ by
  \begin{align*}
    \sigma =& \sum_{i = 1}^{k} (g_i, g_i^{-1}) + \sum_{i=1}^{k} (w_i,  w_i^{-1}) + 2k(1_G, 1_G)\\
    -& \big( (g_1, w_1) + (g_1 w_1, g_1^{-1}) + (g_1 w_1 g_1^{-1}, w_1^{-1}) + ([g_1, w_1], g_2) \\
    &+ ([g_1, w_1]g_2, w_2) + ([g_1, w_1]g_2 w_2, g_2^{-1}) + \cdots \\
    &+ ([g_1, w_1]\cdots [g_{k-1}, w_{k-1}]g_k w_{k}, g_k^{-1}) + ([g_1, w_{1}]\cdots [g_{k-1}, w_{k-1}]g_k w_{k} g_k^{-1}, w_{k}^{-1}) \big).
  \end{align*}
  This cochain satisfies $\partial \sigma = [g_1, w_{1}] \cdots [g_k, w_{k}]$.
  Hence we have
  \begin{align*}
   \nu_{\rho, A, u}([g_1, w_{1}] \cdots [g_k, w_{k}]) = u(\partial \sigma) = \delta u(\sigma) = (\rho^*\chi_b - p^*A)(\sigma) = \chi_b(\rho_*\sigma) - A(p_*\sigma).
  \end{align*}
  Since $A$ is a normalized cochain, $A(p_*\sigma) = 0$ holds.
  Let $\widetilde{\rho(g_i)}, \widetilde{\rho(w_{i})} \in \thr$ be lifts of $\rho(g_i), \rho( w_{i}) \in \h$, respectively, for every $i=1,\ldots ,k$.
  Let $\widetilde{\sigma} \in C_2(\thr)$ be the two-chain defined by replacing $g_i, w_{i}$, and $1_G$ in each term of $\sigma$ by $\widetilde{\rho(g_i)}, \widetilde{\rho(w_{i})}$, and $1_{\thr}$, respectively, for every $i=1,\ldots ,k$.
  Then $\pi_* \widetilde{\sigma} = \rho_*\sigma$.
  Hence, by $\chi_b = -\pi^* (\delta \taur)$ and $\delta \taur(\tf, \tf^{-1}) = 0$ for every $\tf \in \thr$, we have
  \begin{align*}
    \chi_b(\rho_* \sigma) = -\delta \taur(\widetilde{\sigma}) = -\taur\left([\widetilde{\rho(g_1)}, \widetilde{\rho(w_{1})}] \cdots [\widetilde{\rho(g_k)}, \widetilde{\rho(w_{k})}]\right).
  \end{align*}
This completes the proof of (2).
\end{proof}


\section{The auxiliary lemma}\label{subsec:overflow}

Recall our symbol \eqref{eq:conj} for group conjugation. We note that the equality
\begin{align}\label{ore_relation}
  [g, h^n] = [g, h] \cdot {}^{h}[g,h] \cdot \cdots \cdot {}^{h^{n-2}}[g, h] \cdot {}^{h^{n-1}}[g, h].
\end{align}
holds for every $g, h \in G$ and for every positive integer $n > 0$.


%

\begin{lemma}\label{lem:[G,N]}
  Let $G$ be a group, $N$ a normal subgroup, and $\mu \in \QQQ(N)^G$.
Let $k$ be a positive integer and $y_1, \ldots, y_k, z_1, \ldots, z_k \in G$. Assume that the following two conditions are satisfied:
  \begin{enumerate}[$(i)$]
    \item $[y_i, z_i] \in N$ for every $i = 1, \ldots, k$;
    \item $[y_1, z_1] \cdots [y_k, z_k] \in [G, N]$.
  \end{enumerate}
Then for every $n\in \NN$,
\[
  [y_1, z_1^n] \cdots [y_k, z_k^n]\in [G,N] \quad \textrm{and}\quad [y_1^n, z_1] \cdots [y_k^n, z_k]\in [G,N]
\]
hold.
\end{lemma}

\begin{proof}
We first show $[y_1, z_1^n] \cdots [y_k, z_k^n]\in [G,N]$. 
  We set $\overline{g} = g[G,N] \in G/[G,N]$ for every $g \in G$.
  Note that, for every $g \in G$ and $w \in N$, we have $\overline{g} \cdot \overline{w} = \overline{w} \cdot \overline{g}$, and hence $\overline{g} \cdot \overline{w} \cdot \overline{g}^{-1} = \overline{w}$.
  Condition (i), together with \eqref{ore_relation}, implies that
  \begin{align*}
    [\overline{y_i}, \overline{z_i}^n] = [\overline{y_i}, \overline{z_i}] \cdot {}^{\overline{z_i}} [\overline{y_i}, \overline{z_i}] \cdot
    \cdots \cdot  {}^{\overline{z_i}^{n-2}} [\overline{y_i}, \overline{z_i}] \cdot  {}^{\overline{z_i}^{n-1}} [\overline{y_i}, \overline{z_i}]
    = [\overline{y_i}, \overline{z_i}]^n.
  \end{align*}
  Since $[\overline{y_i}, \overline{z_i}]$'s commute with each other by (i), we obtain   \begin{align*}
   [\overline{y_1}, \overline{z_1}^n]\cdots [\overline{y_k}, \overline{z_k}^n] &= [\overline{y_1}, \overline{z_1}]^n \cdots [\overline{y_k}, \overline{z_k}]^n = ([\overline{y_1}, \overline{z_1}] \cdots [\overline{y_k}, \overline{z_k}])^n  \\
    &= (\overline{[y_1, z_1]\cdots [y_k, z_k]})^n = 1_{G/[G,N]}.
  \end{align*}
  Here the last equality comes from (ii). We can prove the latter in a similar manner, using
\[ [y_i^n, z_i] = {}^{z_i^{n-1}}[y_i, z_i] \cdot {}^{z_i^{n-2}}[y_i,z_i] \cdot \cdots \cdot [y_i, z_i]. \qedhere\]
\end{proof}

The following auxiliary lemma is one key to the proofs of Theorems~\ref{thm:main} and \ref{thm:main3}.
\begin{lemma}[the auxiliary lemma]\label{lem:hojo}
  Let $G$ be a group, $N$ a normal subgroup, and $\mu \in \QQQ(N)^G$.
Let $k$ be a positive integer, and $y_1, \ldots, y_k, z_1, \ldots, z_k \in G$. Assume that the following three conditions are satisfied:
  \begin{enumerate}[$(i)$]
    \item same as condition~$(i)$ in Lemma~\textup{\ref{lem:[G,N]}};
    \item same as condition~$(ii)$ in Lemma~\textup{\ref{lem:[G,N]}};
    \item $\lim\limits_{n \to \infty}|\mu([y_1, z_1^n] \cdots [y_k, z_k^n])| = \infty$.
  \end{enumerate}
Then, for the sequence
\[
  (x_n)_{n\in \NN}=([y_1, z_1^n] \cdots [y_k, z_k^n])_{n\in \NN},
\]
we have
\[
\sup_{n\in \NN}\scl_G(x_n)<\infty \quad \textrm{but}\quad \lim_{n\to \infty}\scl_{G,N}(x_n)=\infty.
\]
In particular,
$\scl_{G}$ and $\scl_{G,N}$ are \emph{not} equivalent.
\end{lemma}

\begin{proof}
First, note that by (i) and (ii),  Lemma~\ref{lem:[G,N]} shows that $(x_n)_{n\in \NN}$ is a sequence in $[G,N]$. We have
\[
\sup_{n\in \NN}\scl_{G}(x_n)\leq \sup_{n\in \NN}\cl_G(x_n)\leq k<\infty.
\]
Contrastingly, by condition  (iii), Theorem \ref{thm:mixedBavard} implies that
  \[
    \lim_{n \to \infty}\scl_{G,N}(x_n) = \infty,
  \]
as desired (here we only use the estimate of $\scl_{G,N}$ by Theorem \ref{thm:mixedBavard} from below: it is the easy direction).
\end{proof}


In Lemma~\ref{overflow}, we provide a sufficient condition to apply Lemma~\ref{lem:hojo} that seems useful.

We next provide an example of commutators which satisfies conditions (i) and (ii) in Lemma~\ref{lem:[G,N]}.


\begin{definition}\label{def:g_x}
  We define words $g_i$ and $w_i$ on the alphabet $\{a,b\}$ as follows:
  \[
    g_i =
    \begin{cases}
      bab^{-1} & i = 1 \\
      {}^{ba^{2-i}b^{-1}}a^{i - 1} & i > 1
    \end{cases}
    \ \text{ and } \
    w_i =
    \begin{cases}
      [b,a] & i = 1 \\
      {}^{ba^{2-i}b^{-1}} [b,a^{-1}] & i > 1.
    \end{cases}
  \]
\end{definition}

By induction, we obtain the following:

\begin{lemma}\label{lem:relation}
  In the free group $F_2 = \langle a, b \mid - \rangle$ of rank $2$, the equality
  \[
    [g_1, w_1] \cdots [g_{\genus-1}, w_{\genus-1}] = ba^2b^{-1}a^{-2}[a,b]^\genus a^{2-\genus}ba^{\genus-2}b^{-1}
  \]
  holds for every $\genus \geq 2$.
\end{lemma}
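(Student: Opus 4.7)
The plan is to proceed by induction on $\genus \geq 2$, writing
\[
P_\genus := ba^2b^{-1}a^{-2}[a,b]^{\genus} a^{2-\genus} b a^{\genus-2}b^{-1}
\]
for the word on the right-hand side.

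For the base case $\genus = 2$, the identity asserts that $[g_1,w_1] = [bab^{-1}, [b,a]]$ equals $P_2 = ba^2 b^{-1} a^{-2} [a,b]^2$ (since the trailing factor $a^{2-\genus} b a^{\genus-2} b^{-1}$ becomes trivial when $\genus = 2$). I would verify this by a direct calculation: expanding $(bab^{-1})[b,a](bab^{-1})^{-1}$ collapses using $b^{-1}b = 1$ to $ba^2 b^{-1}a^{-1}\cdot ba^{-1}b^{-1}$, and then multiplying on the right by $[b,a]^{-1} = [a,b]$ reproduces $P_2$ after the cancellation $b^{-1}a^{-1}\cdot ab = 1$ inside the product.

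For the inductive step it suffices to prove that
\[
P_{\genus-1}^{-1}\cdot P_{\genus} = [g_{\genus-1}, w_{\genus-1}]
\]
for each $\genus \geq 3$. The key observation is that $P_{\genus-1}$ and $P_\genus$ share the common prefix $ba^2 b^{-1}a^{-2}$, so massive telescoping occurs in the product $P_{\genus-1}^{-1}P_\genus$: the block $a^2 ba^{-2}b^{-1}\cdot ba^2 b^{-1}a^{-2}$ in the middle collapses to the identity, and $[a,b]^{-(\genus-1)}\cdot[a,b]^\genus = [a,b]$ survives. What remains is exactly
\[
ba^{3-\genus}b^{-1}\cdot a^{\genus-3}[a,b]a^{2-\genus}\cdot ba^{\genus-2}b^{-1}.
\]
On the other hand, by the definitions of $g_{\genus-1}$ and $w_{\genus-1}$ for $\genus-1 \geq 2$, both are conjugates by $ba^{3-\genus}b^{-1}$, hence
\[
[g_{\genus-1}, w_{\genus-1}] = {}^{ba^{3-\genus}b^{-1}}\bigl[a^{\genus-2},[b,a^{-1}]\bigr].
\]
Using $(ba^{3-\genus}b^{-1})^{-1} = ba^{\genus-3}b^{-1}$ and then absorbing the factor $ba^{\genus-3}b^{-1}$ into the tail of the telescoped expression via $ba^{\genus-2}b^{-1}\cdot ba^{3-\genus}b^{-1}= ba b^{-1}$, the problem reduces to the purely $\{a,b\}$-word identity
\[
a^{\genus-3}[a,b]a^{2-\genus}\cdot bab^{-1} = [a^{\genus-2},[b,a^{-1}]],
\]
which one checks by expanding the right-hand side as $a^{\genus-2}\cdot ba^{-1}b^{-1}\cdot a^{2-\genus}\cdot bab^{-1}$ and comparing letter by letter.

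The only obstacle is bookkeeping: keeping the exponents $\genus-2$, $\genus-3$, $2-\genus$, $3-\genus$ straight and checking that each cancellation involves cleanly matching blocks. Once the telescoping in $P_{\genus-1}^{-1}P_\genus$ is observed, the remaining manipulations are short.
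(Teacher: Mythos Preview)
Your proof is correct and follows exactly the approach the paper intends: the paper states only ``By induction, we obtain the following,'' and your argument supplies precisely the induction (base case $\genus=2$ and the verification $P_{\genus-1}^{-1}P_\genus = [g_{\genus-1},w_{\genus-1}]$) that is being alluded to. The telescoping you identify in $P_{\genus-1}^{-1}P_\genus$ and the final reduction to $a^{\genus-3}[a,b]a^{2-\genus}\cdot bab^{-1} = [a^{\genus-2},[b,a^{-1}]]$ are the right checks; only your description of the cancellation in the base case is phrased slightly imprecisely, but the computation itself is fine.
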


We set $y = {}^{ba^2} b^{-1}$ and $z = {}^{ba^2} a^{-\genus}$.
Since $[a, b]^{\genus}$ is trivial in the group $\R = \langle a, b \mid [a,b]^{\genus} \rangle$, we have the following:

\begin{corollary}\label{cor:relation_in_one_relator_grp}
  In $\R$, the equality
  \[
    [g_1, w_1] \cdots [g_{\genus-1}, w_{\genus-1}] = [y,z]
  \]
  holds for every $\genus \geq 2$.
  In particular, $[y,z^n]\in [\R, \R']$ for every positive integer $n$.
\end{corollary}

\section{One-relator groups with torsion}\label{sec:thm2}

In this section, we show the following:
\begin{theorem}\label{thm:main2}
For $\genus \geq 2$,  $\scl_{\R}$ and $\scl_{\R, \R'}$ are \emph{not} equivalent.
\end{theorem}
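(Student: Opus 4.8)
The plan is to apply the auxiliary lemma (Lemma~\ref{lem:hojo}) to the group $\R = \langle a, b \mid [a,b]^\genus\rangle$, its commutator subgroup $N = \R'$, and the single commutator $[y,z]$ with $y = {}^{ba^2}b^{-1}$ and $z = {}^{ba^2}a^{-\genus}$ (so $k=1$). Conditions $(i)$ and $(ii)$ of Lemma~\ref{lem:hojo} are immediate: $[y,z]\in\R'\subseteq N$, and by Corollary~\ref{cor:relation_in_one_relator_grp} the element $[y,z]$ equals a product of $\genus-1$ mixed commutators $[g_1,w_1]\cdots[g_{\genus-1},w_{\genus-1}]$, so in particular $[y,z]\in[\R,\R']$. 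The real content is verifying condition $(iii)$: $\lim_{n\to\infty}|\mu([y^n,z])| = \infty$ for a suitable $\R$-invariant homogeneous quasimorphism $\mu$ on $\R'$.

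To build $\mu$, I would first construct a representation $\rho_\genus\colon\R \to \h$ using the Eisenbud--Hirsch--Neumann theorem (Theorem~\ref{thm:ehn}): since $[a,b]^\genus$ must go to the identity, I need homeomorphisms $f,g\in\h$ with $[f,g]$ of finite order $\genus$, equivalently a lift $\widetilde{[f,g]}\in\th$ whose translation number is $1/\genus$ times an integer times... more precisely, I want $[\rho_\genus(a),\rho_\genus(b)]$ to be an elliptic element of order $\genus$. By Theorem~\ref{thm:ehn} any $\tf\in\th$ with $\underline{m}(\tf) < 1$ and $\overline{m}(\tf) > -1$ is a single commutator in $\th$; choosing such an $\tf$ that projects to a rotation by $1/\genus$ (so $\tf = $ lift of an order-$\genus$ rotation), we get $f_a, f_b\in\h$ with $[f_a,f_b]$ of order $\genus$, hence a well-defined $\rho_\genus\colon\R\to\h$. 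I should also arrange that $\rho_\genus$ does \emph{not} lift to $\thr$, i.e. that $\rho_\genus^* e_\RR \neq 0$, or at least that it lies in the image of $p^*\colon\HHH^2(\R/\R';\RR)\to\HHH^2(\R;\RR)$ so that Lemma~\ref{lem:formula} applies; since $\R/\R' \cong \ZZ^2$ is abelian and $\HHH^2(\ZZ^2;\RR)\cong\RR$, and $e_\RR$ pulled back to the finite-order commutator forces the Euler number to be a multiple of $1/\genus$, this image condition needs to be checked but should follow from the structure of $\R^{\mathrm{ab}}$.

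Granting $\mu = \mu_{\rho_\genus}$ from Lemma~\ref{lem:formula}, condition $(iii)$ reduces by part~(2) of that lemma to a computation in $\thr$: $\mu([y^n,z]) = -\taur\big([\widetilde{\rho_\genus(y^n)},\widetilde{\rho_\genus(z)}]\big)$, and since $[y^n,z] = {}^{y^{n-1}}[y,z]\cdots{}^{y}[y,z]\cdot[y,z]$ by \eqref{ore_relation} (and $[y,z]$ is a product of $\genus-1$ mixed commutators in $\R'$ by Corollary~\ref{cor:relation_in_one_relator_grp}), I can track $\taur$ of the relevant lift. The key point is that $\taur\big(\widetilde{\rho_\genus([y,z])}\big)$, computed via the product of $\genus-1$ mixed commutators and the identity from Lemma~\ref{lem:relation} (which in $F_2$ gives $[g_1,w_1]\cdots[g_{\genus-1},w_{\genus-1}] = ba^2b^{-1}a^{-2}[a,b]^\genus a^{2-\genus}ba^{\genus-2}b^{-1}$, and the factor $[a,b]^\genus$ lifts to a translation $T_1$ or $T_{-1}$ in $\th$ precisely because $[f_a,f_b]$ is an order-$\genus$ rotation whose lift has translation number $\pm 1$ after the $\genus$-th power), is a \emph{nonzero} rational number. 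Then using that $\taur$ is a homogeneous quasimorphism of defect $1$ and that conjugation is $\taur$-invariant up to bounded error, $|\mu([y^n,z])| = |\taur(\prod_{j=0}^{n-1}{}^{\widetilde{\rho_\genus(y)}^j}\widetilde{\rho_\genus([y,z])})|$ grows linearly in $n$ — specifically it is $\underset{n}{\sim} n\cdot|\taur(\widetilde{\rho_\genus([y,z])})|$ up to an $O(n)$ error controlled by the defect, but with the leading coefficient strictly positive — so the limit is infinite. (Here the argument parallels the ``overflow'' mechanism of Lemma~\ref{overflow}.)

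The main obstacle I anticipate is the explicit translation-number bookkeeping in step three: one must choose the lifts $\widetilde{\rho_\genus(a)}, \widetilde{\rho_\genus(b)}\in\thr$ so that the word $ba^2b^{-1}a^{-2}[a,b]^\genus a^{2-\genus}ba^{\genus-2}b^{-1}$ lifts with a controlled, nonzero value of $\taur$, and then show that conjugating by powers of $\widetilde{\rho_\genus(y)}$ and multiplying accumulates this value additively rather than cancelling. This requires knowing that $\rho_\genus(y)$ has zero (or at least bounded) rotation number so that the conjugated copies all contribute with the same sign, which is where the specific form $y = {}^{ba^2}b^{-1}$ and the freedom in choosing $f_a, f_b$ within the Eisenbud--Hirsch--Neumann construction must be used carefully. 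Once the single nonzero value $\taur(\widetilde{\rho_\genus([y,z])}) \neq 0$ is pinned down and the sign-coherence of the conjugates is established, condition $(iii)$ follows and Lemma~\ref{lem:hojo} yields the non-equivalence of $\scl_\R$ and $\scl_{\R,\R'}$, proving Theorem~\ref{thm:main2}.
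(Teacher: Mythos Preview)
Your overall strategy coincides with the paper's: apply Lemma~\ref{lem:hojo} with $k=1$, $y_1=y$, $z_1=z$, build $\rho_\genus\colon\R\to\h$ via Theorem~\ref{thm:ehn}, and take $\mu=\mu_{\rho_\genus}$ from Lemma~\ref{lem:formula} (whose hypothesis is guaranteed by Lemma~\ref{lem:one_relator_from_quotient}). Conditions (i) and (ii) are exactly as you say.

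The gap is in condition~(iii). First, the displayed formula $\mu([y^n,z]) = -\taur\bigl([\widetilde{\rho_\genus(y^n)},\widetilde{\rho_\genus(z)}]\bigr)$ is not what Lemma~\ref{lem:formula}(2) gives, since that lemma requires the second entries to lie in $N=\R'$ and $z\notin\R'$. Applied instead to the legitimate expression $[y^n,z]=\prod_{j}{}^{y^j}\bigl([g_1,w_1]\cdots[g_{\genus-1},w_{\genus-1}]\bigr)$, the lifted product differs from $[\widetilde{\rho_\genus(y)}^n,\widetilde{\rho_\genus(z)}]$ by the factor $([\talpha,\tbeta]^\genus)^n$, and this factor is the entire source of the divergence. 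Second, your soft estimate ``$\sim n\cdot|\taur(\widetilde{\rho_\genus([y,z])})|$ up to an $O(n)$ error controlled by the defect'' cannot close the argument: the defect of $\taur$ is $1$, so splitting $n$ factors costs $n-1$, while the per-term value is only known to lie in $[\taur([\talpha,\tbeta]^\genus)-1,\,\taur([\talpha,\tbeta]^\genus)+1]$; for $\genus=2$ (with either your choice $T_{1/\genus}$ or the paper's $T_{(\genus-1)/\genus}$) this lower bound is $0$, and the linear term need not dominate. The paper sidesteps all of this by observing that $[\talpha,\tbeta]^\genus$ is \emph{central} in $\thr$: pulling it out and reapplying \eqref{ore_relation} collapses the product \emph{exactly} to ${}^{\tbeta\talpha^2}[\tbeta^{-n},\talpha^{-\genus}]\cdot([\talpha,\tbeta]^\genus)^n$, whence $|\mu_{\rho_\genus}([y^n,z])|\ge n\,\taur([\talpha,\tbeta]^\genus)-1$ with no accumulated defect error. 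Your concerns about sign coherence and the rotation number of $\rho_\genus(y)$ are red herrings---$\taur$ is automatically conjugation-invariant because it is homogeneous---the actual mechanism is purely the centrality of the lifted relator.
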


Let $p\colon \R \to \R/\R' \simeq \ZZ^{2}$ be the projection.
\begin{lemma}\label{lem:one_relator_from_quotient}
  The map $p^* \colon \HHH^2(\R/\R';\RR) \to \HHH^2(\R;\RR)$ is an isomorphism.
\end{lemma}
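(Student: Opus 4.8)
The plan is to show that $p^{*}$ is a nonzero linear map from a one-dimensional real vector space into a real vector space of dimension at most one; this forces both spaces to be one-dimensional and $p^{*}$ to be an isomorphism. That $\HHH^{2}(\R/\R';\RR) = \HHH^{2}(\ZZ^{2};\RR)$ is one-dimensional is standard (K\"unneth for $\ZZ^2$), so the content is contained in the two assertions ``$p^{*}\neq 0$'' and ``$\dim_{\RR}\HHH^{2}(\R;\RR)\leq 1$''.

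\emph{Nonvanishing of $p^{*}$.} I would exhibit a central extension $0\to\RR\to E\to\ZZ^{2}\to 0$ whose Euler class generates $\HHH^{2}(\ZZ^{2};\RR)$ and whose pullback along $p$ is nontrivial. Take $E=\{(m,n,t): m,n\in\ZZ,\ t\in\RR\}$ with multiplication $(m,n,t)(m',n',t')=(m+m',n+n',t+t'+mn')$, the inclusion $t\mapsto(0,0,t)$ (which is central), and the projection $(m,n,t)\mapsto(m,n)$. Since $E$ is non-abelian whereas $\ZZ^{2}$ is abelian, this extension does not split, so by \eqref{2coh_central_ext} its Euler class $e(E)$ is a nonzero element of the one-dimensional space $\HHH^{2}(\ZZ^{2};\RR)$, hence a generator. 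By Lemma~\ref{lem:lift_obstruction}, $p^{*}e(E)=0$ would yield a homomorphism $\psi\colon\R\to E$ lifting $p$. Any such $\psi$ must send $a\mapsto(1,0,s)$ and $b\mapsto(0,1,s')$ for some $s,s'\in\RR$, and a direct computation in $E$ gives $\psi([a,b])=(0,0,1)$, so $\psi([a,b]^{\genus})=(0,0,\genus)\neq 1_{E}$; this contradicts the relation $[a,b]^{\genus}=1$ in $\R$. Hence $p^{*}e(E)\neq 0$.

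\emph{Bounding $\HHH^{2}(\R;\RR)$.} Eliminating $c=[a,b]^{-1}$ identifies $\R$ with $\langle a,b,c\mid [a,b]c,\ c^{\genus}\rangle$, the fundamental group of the closed orientable $2$-orbifold $\mathcal{O}$ obtained from the torus by introducing one cone point of order $\genus$. As $\genus\geq 2$ we have $\chi^{\mathrm{orb}}(\mathcal{O})=-(1-1/\genus)<0$, and $\mathcal{O}$ is a good orbifold (it is not one of the bad closed $2$-orbifolds), so $\R$ is isomorphic to a cocompact Fuchsian group. By Selberg's lemma $\R$ has a torsion-free normal subgroup $\Sigma$ of finite index, which is therefore the fundamental group of a closed orientable surface $\Sigma_{h}$; setting $Q=\R/\Sigma$, the Lyndon--Hochschild--Serre spectral sequence of $1\to\Sigma\to\R\to Q\to 1$ collapses over $\RR$ (because $Q$ is finite and $\RR$ has characteristic zero), giving $\HHH^{n}(\R;\RR)\cong\HHH^{n}(\Sigma_{h};\RR)^{Q}$ for all $n$, and in particular an embedding $\HHH^{2}(\R;\RR)\hookrightarrow\HHH^{2}(\Sigma_{h};\RR)\cong\RR$. (Alternatively one may run the partial free $\ZZ\R$-resolution of $\ZZ$ coming from the presentation $2$-complex, which is periodic of period $2$ in degrees $\geq 2$ by Lyndon's identity theorem, and read off $\HHH^{2}(\R;\RR)\cong\RR$ directly.)

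Combining the two steps with the one-dimensionality of $\HHH^{2}(\ZZ^{2};\RR)$ completes the argument. The main obstacle is the second step: pinning down that $\HHH^{2}(\R;\RR)$ is no larger than $\RR$, which is exactly where one has to invoke that $\R$ is virtually a surface group (or, equivalently, carry out the one-relator-group-with-torsion resolution); everything else is essentially formal once Lemma~\ref{lem:lift_obstruction} is available.
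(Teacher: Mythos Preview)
Your argument is correct, but both halves proceed differently from the paper. For the bound $\dim_{\RR}\HHH^{2}(\R;\RR)\leq 1$, the paper stays entirely within the five-term exact sequence~\eqref{five-term}: applying it to $1\to\llangle[a,b]^{\genus}\rrangle\to F_2\to\R\to 1$ and using $\HHH^{2}(F_2;\RR)=0$ gives $\HHH^{2}(\R;\RR)\cong\HHH^{1}(\llangle[a,b]^{\genus}\rrangle;\RR)^{F_2}$, and an $F_2$-invariant homomorphism on the normal closure is determined by its value on $[a,b]^{\genus}$. This avoids the Fuchsian realization, Selberg's lemma, and the Lyndon--Hochschild--Serre spectral sequence altogether. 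For the injectivity of $p^{*}$, the paper again invokes~\eqref{five-term}, this time for $1\to\R'\to\R\to\ZZ^{2}\to 1$, using $\HHH^{1}(\R';\RR)^{\R}=0$ (any $\R$-invariant homomorphism on $\R'$ factors through $\R'/[\R,\R']$, which is cyclic of order dividing $\genus$); your explicit Heisenberg-type extension and Lemma~\ref{lem:lift_obstruction} give a pleasant hands-on alternative to this step. In summary, the paper's route is shorter and uses only tools already set up in Section~\ref{subsec:cohom}, while yours imports more structure but has the virtue of making the hyperbolic-orbifold nature of $\R$ explicit.
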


\begin{proof}
  Let $\llangle {[a, b]^\genus} \rrangle$ be the normal closure of $[a, b]^\genus$ in $F_2$.
  By five-term exact sequence \eqref{five-term} applied to $1 \to \llangle {[a, b]^\genus} \rrangle \to F_2 \to \R \to 1$ and the triviality of $\HHH^2(F_2;\RR)$,
  the map
  \[
    \HHH^1(\llangle {[a, b]^\genus} \rrangle;\RR)^{F_2} \to \HHH^2(\R;\RR)
  \]
  is an isomorphism.
  Since an $F_2$-invariant homomorphism from $\llangle {[a, b]^\genus} \rrangle$ to $\RR$ is determined by its value on ${[a, b]^\genus}$, we have $\HHH^2(\R;\RR) \cong \HHH^1(\llangle {[a, b]^\genus} \rrangle;\RR)^{F_2} \cong \RR$.

  Five-term exact sequence \eqref{five-term} applied to $1 \to \R' \to \R \to \R/\R' \to 1$ asserts that
  \[
    \cdots \to \HHH^1(\R';\RR)^{\R} \to \HHH^2(\R/\R';\RR) \to \HHH^2(\R;\RR)
  \]
  is exact.
  Since $\HHH^1(\R';\RR)^{\R} = 0$ and $\HHH^2(\R/\R';\RR) \cong \RR$, the map $\HHH^2(\R/\R';\RR) \to \HHH^2(\R;\RR)$ is an injection from $\RR$ to $\RR$.
  This implies the lemma.
\end{proof}

For the proof of Theorem~\ref{thm:main2}, we employ the words $g_1,\ldots ,g_{\genus-1}, w_1,\ldots ,w_{\genus-1}$ on the alphabet $\{a,b\}$ defined in Definition~\ref{def:g_x}. We employ two more words $y = {}^{ba^2} b^{-1}$ and $z = {}^{ba^2} a^{-\genus}$. Corollary~\ref{cor:relation_in_one_relator_grp} states that the equality
\[
[y,z]=[g_1,w_1]\cdots [g_{\genus -1},w_{\genus-1}]
\]
holds \emph{in $\R$}.

\begin{proof}[Proof of Theorem \textup{\ref{thm:main2}}]
In this proof, we regard $y = {}^{ba^2} b^{-1}$ and $z = {}^{ba^2} a^{-\genus}$ as elements in $\R$.
  Recall that the map $T_r \colon \RR \to \RR$ is defined by $T_r(x) = x + r$ for every $r \in \RR$.
  By Theorem \ref{thm:ehn}, there exist $\tf, \tg \in \th$ such that
\[
   T_{\frac{\genus - 1}{\genus}} = [\tf, \tg]
\]
holds.
  We set $\talpha = (\tf, 0)$ and $\tbeta = (\tg, 0) \in \thr$.
  Then we have
  \[
    [\talpha, \tbeta] = ([\tf, \tg], 0) = (T_{\frac{\genus-1}{\genus}}, 0).
  \]
  We set $\alpha = \pi(\talpha)$ and $\beta = \pi(\tbeta)$ in $\h$.
  Then the map $\rho_{\genus} \colon \R \to \h$ defined by $\rho_{\genus}(a) = \alpha$ and $\rho_{\genus}(b) = \beta$ is a well-defined homomorphism.
  Indeed,
  \[
    \rho_{\genus}([a,b]^{\genus}) = [\alpha, \beta]^{\genus} = [\pi(\talpha), \pi(\tbeta)]^{\genus} = \pi([\talpha, \tbeta]^{\genus}) = \pi((T_{\genus-1}, 0)) = 1_{\h}.
  \]
By Lemma \ref{lem:one_relator_from_quotient}, we can apply Construction \ref{construction} to the group pair $(G,N)=(\R,\R')$ and to this homomorphism $\rho_{\genus}\colon \R\to \h$.
(In fact, for this pair, Construction \ref{construction} applies to an arbitrary homomorphism $\rho\colon \R\to \h$.)
Thus, the resulting invariant homogeneous quasimorphism $\mu_{\rho_{\genus}}\colon \R'\to \RR$ satisfies (1) and (2) of Theorem \ref{thm:formula}. In particular, $\mu_{\rho_{\genus}}\in \QQQ(\R')^{\R}$ and $D(\mu_{\rho_{\genus}})\leq 1$.
In what follows, we will verify that Lemma~\ref{lem:hojo} applies to the case where $k=1$, $y_1=y$, $z_1=z$ and $\mu=\mu_{\rho_{\genus}}$. Condition~(i) trivially holds since we take $N=\R'$. Condition~(ii) holds by Corollary~\ref{cor:relation_in_one_relator_grp}. Therefore, it only remains to show that condition~(iii) is satisfied, namely,
\begin{equation}\label{eq:infty_1}
 \lim_{n\to \infty}|\mu_{\rho_{\genus}}([y,z^n])|=\infty
\end{equation}
holds.



  We define elements $\widetilde{\rho_{\genus}(g_i)}$ and $\widetilde{\rho_{\genus}(w_i)}$ of $\thr$ by replacing each $a$ and $b$ in the words $g_i$ and $w_i$ in Definition \ref{def:g_x}
  with elements $\talpha$ and $\tbeta$ of $\thr$, respectively, for every $i=1,\ldots ,\genus -1$.
  We set $\try = {}^{\tbeta \talpha^2} \tbeta^{-1}$.
  These $\widetilde{\rho_{\genus}(g_i)}$, $\widetilde{\rho_{\genus}(w_i)}$, and $\try$ are lifts of $\rho_{\genus}(g_i)$, $\rho_{\genus}(w_i)$, and $\rho_{\genus}(y)$, respectively, for every $i=1,\ldots ,\genus-1$.
  By Lemma \ref{lem:relation} and the fact that $[\talpha, \tbeta]^{\genus} = (T_{\genus-1}, 0)$ is contained in the center of $\thr$, we obtain
  \begin{align}\label{eq:H}
    [\widetilde{\rho_{\genus}(g_1)}, \widetilde{\rho_{\genus}(w_1)}] \cdots [\widetilde{\rho_{\genus}(g_{\genus-1})}, \widetilde{\rho_{\genus}(w_{\genus-1})}]
    &= \tbeta \talpha^2 \tbeta^{-1} \talpha^{-2}[\talpha,\tbeta]^\genus \talpha^{2-\genus}\tbeta \talpha^{\genus-2}\tbeta^{-1} \nonumber \\
    &= \tbeta \talpha^2 \tbeta^{-1} \talpha^{-2} \talpha^{2-\genus}\tbeta \talpha^{\genus-2}\tbeta^{-1} \cdot [\talpha,\tbeta]^\genus \\
    & = {}^{\tbeta \talpha^2} [\tbeta^{-1}, \talpha^{-\genus}]\cdot [\talpha,\tbeta]^\genus \nonumber.
  \end{align}

Let $n\in \NN$. By \eqref{ore_relation} and the equality $[y,z]=[g_1,w_1]\cdots [g_{\genus -1},w_{\genus-1}]$, we have
  \begin{align*}
    [y, z^n] =& \big([g_1,w_1] \cdots [g_{\genus - 1}, w_{\genus - 1}]\big) \cdot {}^{z}\big([g_1,w_1] \cdots [g_{\genus - 1}, w_{\genus - 1}]\big) \\
    & \cdots {}^{z^{n-2}}\big([g_1,w_1] \cdots [g_{\genus - 1}, w_{\genus - 1}]\big) \cdot  {}^{z^{n-1}}\big([g_1,w_1] \cdots [g_{\genus - 1}, w_{\genus - 1}]\big).
  \end{align*}
  By \eqref{eq:H}, we obtain
  \begin{align*}
    & \big([\widetilde{\rho_{\genus}(g_1)}, \widetilde{\rho_{\genus}(w_1)}] \cdots [\widetilde{\rho_{\genus}(g_{\genus-1})}, \widetilde{\rho_{\genus}(w_{\genus-1})}]\big)
    \cdot {}^{\trrz}\big([\widetilde{\rho_{\genus}(g_1)}, \widetilde{\rho_{\genus}(w_1)}] \cdots [\widetilde{\rho_{\genus}(g_{\genus-1})}, \widetilde{\rho_{\genus}(w_{\genus-1})}]\big) \cdot \\
    & \cdots {}^{\trrz^{n-2}}\big([\widetilde{\rho_{\genus}(g_1)}, \widetilde{\rho_{\genus}(w_1)}] \cdots [\widetilde{\rho_{\genus}(g_{\genus-1})}, \widetilde{\rho_{\genus}(w_{\genus-1})}]\big)
    \cdot {}^{\trrz^{n-1}}\big([\widetilde{\rho_{\genus}(g_1)}, \widetilde{\rho_{\genus}(w_1)}] \cdots [\widetilde{\rho_{\genus}(g_{\genus-1})}, \widetilde{\rho_{\genus}(w_{\genus-1})}]\big) \\
    = & \big({}^{\tbeta \talpha^2} [\tbeta^{-1}, \talpha^{-\genus}]\big)
    \cdot {}^{\trrz}\big({}^{\tbeta \talpha^2} [\tbeta^{-1}, \talpha^{-\genus}]\big) \cdots {}^{\trrz^{n-2}}\big({}^{\tbeta \talpha^2} [\tbeta^{-1}, \talpha^{-\genus}]\big)
    \cdot {}^{\trrz^{n-1}}\big({}^{\tbeta \talpha^2} [\tbeta^{-1}, \talpha^{-\genus}]\big) \cdot \big([\talpha,\tbeta]^\genus\big)^{n} \\
    = & {}^{\tbeta \talpha^2} \big( [\tbeta^{-1}, \talpha^{-\genus}]
    \cdot {}^{\talpha^{-\genus}} [\tbeta^{-1}, \talpha^{-\genus}] \cdots {}^{(\talpha^{-\genus})^{n-2}}[\tbeta^{-1}, \talpha^{-\genus}] \cdot {}^{(\talpha^{-\genus})^{n-1}}[\tbeta^{-1}, \talpha^{-\genus}] \big)
    \cdot \big([\talpha,\tbeta]^\genus\big)^{n} \\
    = & {}^{\tbeta \talpha^2} [\tbeta^{-1}, \talpha^{-\genus n}]\cdot ([\talpha,\tbeta]^{\genus})^n.
  \end{align*}
  Here the last equality comes from \eqref{ore_relation}.
  Hence, by Theorem \ref{thm:formula} and (\ref{hqm_commutes}), we obtain
  \begin{align*}
   \mu_{\rho_{\genus}} ([y, z^n]) &= -\taur\left( {}^{\tbeta \talpha^2} [\tbeta^{-1}, \talpha^{-\genus n}]\cdot ([\talpha,\tbeta]^{\genus})^n \right) = -\taur({}^{\tbeta \talpha^2} [\tbeta^{-1}, \talpha^{-\genus n}]) - \taur(([\talpha,\tbeta]^{\genus})^n) \\
   &=-\taur({}^{\tbeta \talpha^2} [\tbeta^{-1}, \talpha^{-\genus n}]) - \tau(T_{\genus-1}^n) = -\taur({}^{\tbeta \talpha^2} [\tbeta^{-1}, \talpha^{-\genus n}]) - n(\genus-1).
 \end{align*}
By \eqref{comm_defect}, we have $|\taur({}^{\tbeta \talpha^2} [\tbeta^{-1}, \talpha^{\genus n}])| \leq D(\taur) = 1$.
  Hence we conclude that
  \begin{align}\label{single_comm_m}
    \left|\mu_{\rho_{\genus}} ([y, z^n])\right| \geq n(\genus - 1) - 1.
  \end{align}
  Then, \eqref{eq:infty_1} immediately follows from \eqref{single_comm_m}.

  Therefore, Lemma~\ref{lem:hojo} applies: the sequence $([y,z^n])_{n\in \NN}$ witnesses the non-equivalence of $\scl_{\R}$ and $\scl_{\R,\R'}$.
\end{proof}

\section{Proof of Theorem \ref{thm:main}}\label{sec:thm1}

In this section, we prove Theorem \ref{thm:main}.
Let $\G$ be the fundamental group of a closed oriented surface of genus $\genus \geq 2$.
The group $\G$ has the following presentation:
\begin{align*}
  \G = \langle a_1, \ldots, a_{\genus}, b_1, \ldots, b_{\genus} \mid [a_1, b_1] \cdots [a_\genus, b_\genus] \rangle.
\end{align*}

Let $q \colon \G \to \R$ be a map defined by
\[
  q(a_i) = a \ \text{ and } \ q(b_i) = b
\]
for each $i = 1, \ldots ,\genus$.
Since $q([a_1, b_1] \cdots [a_\genus, b_\genus]) = [a,b]^{\genus} = 1_{\R}$, the map $q$ is a well-defined homomorphism.
Let $\G'$ be the commutator subgroup and $q' \colon \G' \to \R'$ the induced map.

Let $\rho_{\genus} \colon \R \to \h$ be the homomorphism and $\mu_{\rho_{\genus}}$ the $\R$-invariant homogeneous quasimorphism on $\R'$ used in the proof of Theorem \ref{thm:main2}.
From the proof of Theorem~\ref{thm:main2}, we continue to use the representation $\rho_{\genus} \colon \R \to \h$ and the resulting quasimorphism $\mu_{\rho_{\genus}}\in \QQQ(\R')^{\R}$. Then we obtain a quasimorphism $q'^*\mu_{\rho_{\genus}}\in \QQQ(\G')^{\G}$.
This approach of employing $q\colon \G\to \R$ was suggested to the authors by Morimichi Kawasaki.

\begin{proof}[Proof of Theorem \textup{\ref{thm:main}}]
  We set
  \begin{align}\label{eq:y_i,z_i}
    y_i = {}^{b_i a_i^2} b_i^{-1}, \  \ z_i = {}^{b_i a_i^2} a_i^{-\genus} \in \G
  \end{align}
  for every $i=1,2,\ldots ,\genus$. In what follows, we will verify that conditions (i), (ii) and (iii) in Lemma~\ref{lem:hojo} are satisfied for $k=\genus$, $y_1,\ldots ,y_{\genus}$, $z_1,\ldots ,z_{\genus}$ and $\mu=q'^*\mu_{\rho_{\genus}}$. Condition (i) trivially holds since $N=\G'$. To verify condition~(ii), we first claim that
\begin{equation}\label{eq:hairu}
[y_1,{}^{b_1 a_1^2} a_1^{-1}]\cdots [y_{\genus},{}^{b_{\genus} a_{\genus}^2} a_{\genus}^{-1}]\in [\G,\G']
\end{equation}
To show \eqref{eq:hairu}, set $\overline{g}=g[\G,\G']\in \G/[\G,\G']$ for $g\in \G$. Then, as we argued in the proof of Lemma~\ref{lem:[G,N]}, we have $\overline{gw}=\overline{wg}$ and (in particular) $\overline{{}^gw}=\overline{w}$ for every $g\in \G$ and $w\in \G'$. Also, recall the surface group relation $[a_1, b_1] \cdots [a_{\genus}, b_{\genus}] = 1_{\G}$. Thus, we obtain the following equalities:
\begin{align*}
\overline{[y_1,{}^{b_1 a_1^2} a_1^{-1}]\cdots [y_{\genus},{}^{b_{\genus} a_{\genus}^2} a_{\genus}^{-1}]}=& \overline{[y_1,{}^{b_1 a_1^2} a_1^{-1}]}\cdots \overline{[y_{\genus},{}^{b_{\genus} a_{\genus}^2} a_{\genus}^{-1}]}\\
=& \overline{{}^{b_1 a_1^2}[b_1^{-1}, a_1^{-1}]}\cdots \overline{{}^{b_{\genus} a_{\genus}^2}[b_{\genus}^{-1}, a_{\genus}^{-1}]}\\
=& \overline{[b_1^{-1}, a_1^{-1}]}\cdots \overline{[b_{\genus}^{-1}, a_{\genus}^{-1}]}\\
=& \overline{[a_1, b_1]}^{-1}\cdots \overline{[a_{\genus},b_{\genus}]}^{-1}\\
=& \overline{[a_1, b_1]\cdots [a_{\genus},b_{\genus}]}^{-1}\\
=& \overline{1_{\G}}^{-1}=\overline{1_{\G}},
\end{align*}
hence obtaining \eqref{eq:hairu}. Then by Lemma~\ref{lem:[G,N]}, we conclude that condition (ii) holds.

Finally, we verify condition~(iii). Let $n\in \NN$.  Since
  \begin{align*}
    q'^*\mu_{\rho_{\genus}} ([y_1, z_1^n] \cdots [y_\genus, z_{\genus}^n]) &= \mu_{\rho_{\genus}}([y,z^n]^{\genus}) = \genus \cdot \mu_{\rho_{\genus}}([y, z^n]),
  \end{align*}
  we obtain
  \begin{align}\label{eq:big}
    \left| q'^*\mu_{\rho_{\genus}} ([y_1, z_1^n] \cdots [y_\genus, z_{\genus}^n])\right| \geq \genus (n(\genus - 1) - 1)
  \end{align}
by \eqref{single_comm_m}. Inequality \eqref{eq:big} in particular yields
\[
\lim_{n\to \infty} \left| q'^*\mu_{\rho_{\genus}} ([y_1, z_1^n] \cdots [y_\genus, z_{\genus}^n])\right|=\infty,
\]
verifying condition~(iii). Therefore Lemma~\ref{lem:hojo} applies, and the sequence
\begin{equation}\label{eq:ore_no_x}
(x_n)_{n\in \NN}=([y_1,z_1^{n}]\cdots [y_{\genus},z_{\genus}^{n}])_{n\in \NN}
\end{equation}
witnesses the non-equivalence of $\scl_{\G}$ and $\scl_{\G,\G'}$.
\end{proof}

\begin{remark}\label{rem:non-ext_qmu_surf}
  The $\G$-invariant homogeneous quasimorphism $q'^*\mu_{\rho_{\genus}} \in \QQQ(\G')^{\G}$ defines a non-zero element of $\W(\G, \G')$.
  Indeed, if not, then we may take $h \in \HHH^1(\G')^{\G}$ and $\mu \in \QQQ(\G)$ such that $q'^*\mu_{\rho_{\genus}} = h + i^* \mu$.
  Then we have
\[
\sup_{n\in \NN}| q'^*\mu_{\rho_{\genus}}(x_n)|=\sup_{n\in \NN}| \mu(x_n)|\leq (2\genus -1)D(\mu)<\infty,
\]
contradicting $\lim\limits_{n\to \infty}| q'^*\mu_{\rho_{\genus}}(x_n)|=\infty$. Here, note that $h(x_n)=0$ since $x_n\in [\G,\G']$.
\end{remark}

\section{Proof of Theorem \ref{thm:main3}}\label{sec:thm3}

We recall our setting from the introduction as follows.
Let $X$ be a hyperbolic $3$-manifold that fibers over the circle with fiber $\Sigma_{\genus}$.
Then there exists $\psi \in \Aut_{+}(\G)$ such that
 $\pi_1(X) = \G \rtimes_{\psi} \ZZ$. Using this $\psi$, we set $\Gps=\pi_1(X)$. The abelianization map $\mathrm{Ab}_{\G}\colon \G\to \ZZ^{2\genus}$ induces a surjection $\pps \colon \Gps \to \ZZ^{2\genus} \rtimes_{s_{\genus}(\psi)} \ZZ$, where $s_{\genus}\colon \Aut_+(\G) \to \Sp(2\genus,\ZZ)$ is the symplectic representation, which is induced by $\mathrm{Ab}_{\G}$.
Set $ \Nps=\Ker(\pps)$; we have $\Nps=\iota(\G')$, where $\iota\colon \G\to \Gps=\G \rtimes_{\psi} \ZZ$ is the natural inclusion. Set the group quotient $\Gamps=\Gps/\Nps \simeq \ZZ^{2\genus} \rtimes_{s_{\genus}(\psi)} \ZZ$.
It was essentially shown in the proof of Theorem 1.2 of \cite{non-extendable} that the pullback $\HHH^2(\Gamps;\RR) \to \HHH^2(\Gps;\RR)$ of the quotient map is surjective.
Hence every representation $\Gps \to \h$ satisfies the condition in
Construction \ref{construction}.

Let $\iota \colon \G \to \Gps$ be the natural inclusion.
It is known that there exists a representation $\overline{\rho} \colon \Gps \to \h$ whose restriction $\rho = \overline{\rho} \circ \iota \colon \G \to \PSL(2, \RR) \hookrightarrow \h$ is a Fuchsian representation (see \cite[Section 7.1]{MR3887602} for example).
We apply Construction \ref{construction} to this representation $\overline{\rho}$ and obtain a $\Gps$-invariant homogeneous quasimorphism $\mu_{\overline{\rho}} \colon \Nps \to \RR$.

Since the pullback $\HHH^2(\G/\G';\RR) \to \HHH^2(\G;\RR)$ is also surjective, the Fuchsian representation $\rho \colon \G \to \h$ fulfills the assumption in Construction \ref{construction}.
Let $\mu_{\rho} \colon \G' \to \RR$ be a resulting $\G$-invariant homogeneous quasimorphism.
By condition (2) of Theorem \ref{thm:formula}, the pullback $\iota^*\mu_{\overline{\rho}}$ coincides with $\mu_{\rho}$ on $[\G, \G']$.
Theorem \ref{thm:intro_non-ext}, together with the fact that the class $\rho^*e_{\RR}$ is non-zero, the quasimorphism $\mu_{\rho}$ defines a non-zero element of $\W(\G, \G')$.

The quasimorphism $q'^*\mu_{\rho_{\genus}}$ used in the proof of Theorem \ref{thm:main} also defines a non-zero element of $\W(\G, \G')$ by Remark \ref{rem:non-ext_qmu_surf}.
By using the following theorem, we can compare these two quasimorphisms up to $\HHH^1(\G')^{\G} + i^*\QQQ(\G)$.

\begin{theorem}[{\cite[Theorem~1.1]{non-extendable}}]\label{thm:one-dim_surf}
We have
\[
\dim_{\RR} \W(\G, \G')=1.
\]
\end{theorem}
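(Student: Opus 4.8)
The plan is to reinterpret the space $\QQQ(\G')^{\G}/(\HHH^1(\G';\RR)^{\G}+i^*\QQQ(\G))$ as a subspace of $\HHH^2(\G;\RR)$ via bounded cohomology, and then to use that $\HHH^2(\G;\RR)=\HHH^2(\Sigma_{\genus};\RR)\cong\RR$ to pin the dimension to $1$. Write $\Gamma=\G/\G'\cong\ZZ^{2\genus}$. Since $\Gamma$ is abelian, hence amenable, $\HHH_b^n(\Gamma;\RR)=0$ for all $n\geq1$. Feeding this into the bounded five-term exact sequence~\eqref{bdd_five_term} for $1\to\G'\to\G\to\Gamma\to1$ shows that the restriction map $i^*\colon\HHH_b^2(\G;\RR)\to\HHH_b^2(\G';\RR)^{\G}$ is an \emph{isomorphism} (injective because $\HHH_b^2(\Gamma;\RR)=0$, surjective because $\HHH_b^3(\Gamma;\RR)=0$).

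First I would define a linear map $\Phi\colon\QQQ(\G')^{\G}\to\HHH^2(\G;\RR)$ as the composite $\mu\mapsto[\delta\mu]$, followed by $(i^*)^{-1}$, followed by the comparison map $c_{\G}\colon\HHH_b^2(\G;\RR)\to\HHH^2(\G;\RR)$; this makes sense because the coboundary of a $\G$-invariant homogeneous quasimorphism is a bounded $\G$-fixed cocycle, so $[\delta\mu]$ genuinely lies in $\HHH_b^2(\G';\RR)^{\G}$. The decisive step is to show that $\Ker\Phi=\HHH^1(\G';\RR)^{\G}+i^*\QQQ(\G)$. The inclusion $\supseteq$ is immediate: homomorphisms have vanishing coboundary, and for $\nu\in\QQQ(\G)$ the class $[\delta\nu]\in\HHH_b^2(\G;\RR)$ is killed by $c_{\G}$, so $\Phi(i^*\nu)=0$. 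For $\subseteq$: if $\Phi(\mu)=0$, then $(i^*)^{-1}[\delta\mu]\in\Ker c_{\G}$, and every element of $\Ker c_{\G}$ has the form $[\delta\nu]$ for some $\nu\in\QQQ(\G)$ (a bounded exact two-cocycle has a primitive one-cochain, which is a quasimorphism, and homogenizing does not alter the coboundary class). Hence $[\delta(\mu-i^*\nu)]=0$ in $\HHH_b^2(\G';\RR)$, so the homogeneous quasimorphism $\mu-i^*\nu$ lies within bounded distance of a homomorphism; a homogeneity argument then forces $\mu-i^*\nu$ itself to be a homomorphism, which is $\G$-invariant because both $\mu$ and $i^*\nu$ are. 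Therefore $\Phi$ descends to an \emph{injection} $\QQQ(\G')^{\G}/(\HHH^1(\G';\RR)^{\G}+i^*\QQQ(\G))\hookrightarrow\HHH^2(\G;\RR)\cong\RR$, giving dimension at most $1$.

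For the matching lower bound I would exhibit one non-extendable invariant quasimorphism. Take a Fuchsian representation $\rho\colon\G\to\PSL(2,\RR)\hookrightarrow\h$; since $\HHH^2(\Gamma;\RR)\to\HHH^2(\G;\RR)$ is surjective, Lemma~\ref{lem:formula} applies to $\rho$ and yields $\mu_{\rho}\in\QQQ(\G')^{\G}$ with $\delta\mu_{\rho}=i^*\rho^*\chi_b$, so $\Phi(\mu_{\rho})=c_{\G}(\rho^*[\chi_b])=\rho^*e_{\RR}$, which is nonzero (it is the Euler class of the Fuchsian representation, with Euler number $\pm(2\genus-2)$); alternatively one may invoke Remark~\ref{rem:non-extendable}. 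Thus $\Phi$ has nonzero image and the dimension is at least $1$, so it equals $1$. The main obstacle is the kernel computation in the second paragraph, namely executing cleanly the two homogenization steps — realizing a class in $\Ker c_{\G}$ by the coboundary of a \emph{homogeneous} quasimorphism, and upgrading "within bounded distance of a homomorphism" to "equal to a homomorphism" — together with the bookkeeping that $[\delta\mu]$ really lands in the $\G$-invariant part so that $(i^*)^{-1}$ is available; the amenability of $\ZZ^{2\genus}$, which makes $i^*$ an isomorphism, is the essential structural input.
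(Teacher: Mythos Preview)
The paper does not prove this theorem; it is quoted verbatim from \cite[Theorem~1.1]{non-extendable} and used as a black box in Section~\ref{sec:thm3}. So there is no ``paper's own proof'' to compare against.

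That said, your argument is correct and is in fact the same mechanism the present paper deploys in the proof of Proposition~\ref{prop:taut_qm} for the mapping-torus pair $(\Gps,\Nps)$: the map you call $\Phi$ is exactly the composite $c_G\circ(i^*)^{-1}\circ\mathbf{d}$ considered there, with amenability of the quotient making $i^*$ an isomorphism on $\HHH_b^2$. Your kernel computation is more explicit than what the paper sketches in that proposition, and both homogenization steps you flag are standard and go through as you describe. For the lower bound, either route you mention works; note that the surjectivity of $p^*\colon\HHH^2(\ZZ^{2\genus};\RR)\to\HHH^2(\G;\RR)$ (needed to invoke Lemma~\ref{lem:formula}) is asserted without proof in Section~\ref{sec:thm3} as well, and follows e.g.\ from the fact that $\HHH^2(\G;\RR)$ is generated by cup products of degree-one classes, all of which factor through the abelianization. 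So your proposal is sound and aligned with the paper's toolkit; it is presumably close to the argument in \cite{non-extendable}.
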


By Theorem \ref{thm:one-dim_surf}, there exist $h \in \HHH^1(\G')^{\G}$, $\mu \in \QQQ(\G)$, and a non-zero constant $a$ such that
\begin{align*}
  \mu_{\rho} = a \cdot (q'^*\mu_{\rho_{\genus}}) + h + i^*\mu.
\end{align*}

\begin{lemma}\label{lem:comparison}
For the non-zero constant $a$ above, the inequality
  \[
    | \mu_{\rho}(x_n) | \geq |a| \cdot (\genus (n(\genus - 1) - 1)) - (2\genus - 1) D(\mu)
  \]
  holds for every positive integer $n$.
  Here $x_n$ is the element of $\G$ defined by  \eqref{eq:ore_no_x} and \eqref{eq:y_i,z_i}.
\end{lemma}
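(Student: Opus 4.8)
The plan is to expand $\mu_{\rho}(x_n)$ using the decomposition
\[
\mu_{\rho} = a \cdot (q'^*\mu_{\rho_{\genus}}) + h + i^*\mu
\]
established just above the statement, so that $\mu_{\rho}(x_n) = a \cdot q'^*\mu_{\rho_{\genus}}(x_n) + h(x_n) + \mu(x_n)$, and then to control the three terms on the right separately. The first two are handled by what is already known, and the third is bounded by a routine quasimorphism estimate; combining everything via the triangle inequality gives the claimed lower bound.

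First I would dispose of the homomorphism term. Since $h \in \HHH^1(\G')^{\G}$ is a $\G$-invariant homomorphism and $x_n \in [\G,\G']$ (this was verified in the proof of Theorem~\ref{thm:main}), we have $h(x_n) = 0$: indeed a $\G$-invariant homomorphism $\G' \to \RR$ kills every mixed commutator $[g,w]$ with $g \in \G$, $w \in \G'$, because $h([g,w]) = h({}^{g}w) - h(w) = 0$ by $\G$-invariance, and hence $h$ vanishes on the subgroup $[\G,\G']$ generated by such elements. (This is the same observation recorded parenthetically in Remark~\ref{rem:non-ext_qmu_surf}.)

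Next I would quote the growth of the main term directly from the proof of Theorem~\ref{thm:main}: inequality \eqref{eq:big} gives $\bigl| q'^*\mu_{\rho_{\genus}}(x_n) \bigr| \geq \genus\bigl(n(\genus - 1) - 1\bigr)$ for every positive integer $n$, where $x_n$ is as in \eqref{eq:ore_no_x} and \eqref{eq:y_i,z_i}. Then I would bound the error term $\mu(x_n)$: by construction $x_n = [y_1^n, z_1] \cdots [y_\genus^n, z_\genus]$ is a product of $\genus$ commutators in $\G$, so $\cl_{\G}(x_n) \leq \genus$, and the standard estimate for a homogeneous quasimorphism on a product of $\genus$ commutators (each commutator contributes at most $D(\mu)$ by \eqref{comm_defect}, and assembling $\genus$ factors costs at most $(\genus - 1)D(\mu)$ more) yields $|\mu(x_n)| \leq (2\genus - 1) D(\mu)$.

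Finally, putting these together with the triangle inequality,
\[
|\mu_{\rho}(x_n)| \geq |a| \cdot \bigl| q'^*\mu_{\rho_{\genus}}(x_n) \bigr| - |\mu(x_n)| \geq |a| \cdot \bigl(\genus(n(\genus - 1) - 1)\bigr) - (2\genus - 1) D(\mu),
\]
which is exactly the asserted inequality. There is no genuine obstacle here: all the hard work (the one-dimensionality in Theorem~\ref{thm:one-dim_surf} that produces the constant $a$, the explicit growth estimate \eqref{eq:big}, and the translation-number computation behind it) is already done. The only two points needing a careful word are (i) that $h$ annihilates all of $[\G,\G']$, not merely $\G''$, and (ii) getting the constant $(2\genus - 1)$ right in the quasimorphism bound for a product of $\genus$ commutators.
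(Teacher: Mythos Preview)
Your proposal is correct and follows essentially the same approach as the paper's own proof: decompose $\mu_{\rho}(x_n)$ via $\mu_{\rho} = a\cdot q'^*\mu_{\rho_{\genus}} + h + i^*\mu$, kill the $h$-term using $x_n\in[\G,\G']$ and $\G$-invariance, bound $|\mu(x_n)|\leq(2\genus-1)D(\mu)$, and invoke \eqref{eq:big}. The only difference is that you spell out in slightly more detail why $h$ vanishes on $[\G,\G']$ and how the constant $(2\genus-1)$ arises.
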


\begin{proof}
  Let $n\in \NN$. We have shown that $x_n\in [\G, \G']$ in the proof of Theorem~\ref{thm:main}.
  Since $h$ is a $\G$-invariant homomorphism, we have $h(x_n)=h([y_1, z_1^n] \cdots [y_\genus, z_\genus^n]) = 0$.
  Hence we obtain
\[
  \mu_{\rho}(x_n) = a\cdot q'^*\mu_{\rho_{\genus}}(x_n)+\mu(x_n).
\]
Since $|\mu(x_n)|=|\mu([y_1, z_1^n] \cdots [y_\genus, z_\genus^n])| \leq (2\genus - 1)D(\mu)$, we obtain
  \[
    | \mu_{\rho}(x_n) | \geq |a| \cdot |q'^*\mu_{\rho_{\genus}}(x_n) | - (2\genus - 1) D(\mu).
  \]
  Hence \eqref{eq:big} completes the proof.
\end{proof}

We are now ready to prove Theorem \ref{thm:main3}.

\begin{proof}[Proof of Theorem \textup{\ref{thm:main3}}]
Let $y_1,\ldots ,y_{\genus}$ and $z_1,\ldots ,z_{\genus}$ are the elements of $\G$ defined in \eqref{eq:y_i,z_i}. Let $x_n\in \G$ be defined in \eqref{eq:ore_no_x}.
  We set $\eta_i = \iota(y_i)$ and $\zeta_i = \iota(z_i)$ for every $i=1,\ldots ,\genus$ and
  \[
    \xi_n=[\eta_1,\zeta_1^n]\cdots[\eta_{\genus},\zeta_{\genus}^n]\quad(=\iota(x_n))
  \]
  for every $n\in \NN$.
Take the quasimorphism $\mu_{\overline{\rho}}\in \QQQ(\Nps)^{\Gps}$ constructed in the first part of this section.
In what follows, we will prove that conditions (i), (ii) and (iii) in Lemma~\ref{lem:hojo} are fulfilled for $k=\genus$, $y_1=\eta_1,\ldots ,y_{\genus}=\eta_{\genus}$, $z_1=\zeta_1,\ldots ,z_{\genus}=\zeta_{\genus}$, and $\mu=\mu_{\overline{\rho}}$. Condition (i) is clear: recall that $\Nps=\iota(\G')$.  Since $x_n \in [\G, \G']$, we have $\xi_n\in [\iota(\G),\Nps]\subset [\Gps,\Nps]$. Hence condition (ii) holds. Finally, we discuss condition (iii).
Note that $\iota^*\mu_{\overline{\rho}} = \mu_{\rho}$ on $[\G, \G']$ by Theorem \ref{thm:formula}~(2).
Hence Lemma \ref{lem:comparison} implies that
  \begin{align*}
    |\mu_{\overline{\rho}}(\xi_n)| 
    &= |\mu_{\rho}(x_n)|\\
    & \geq |a| \cdot (\genus (n(\genus - 1) - 1)) - (2\genus - 1) D(\mu).
  \end{align*}
By recalling that $a\ne 0$, we conclude that
\[
\lim_{n\to \infty}|\mu_{\overline{\rho}}(\xi_n)|=\infty,
\]
thus verifying condition (iii). Therefore, Lemma~\ref{lem:hojo} applies, and the sequence $(\xi_n)_{n\in \NN}$ witnesses the non-equivalence of $\scl_{\Gps}$ and $\scl_{\Gps,\Nps}$.
\end{proof}

\begin{remark} \label{remark:equivalence}
In \cite{non-extendable}, Kawasaki, Kimura and the authors showed that in several cases $\scl_G$ and $\scl_{G,N}$ are equivalent. Their results are summarized as follows: if
\begin{align} \label{equation:equivalence}
\W(G,N) = 0,
\end{align}
then $\scl_G$ and $\scl_{G,N}$ are equivalent; moreover if $\Gamma = G/N$ is solvable, then $\scl_G$ and $\scl_{G,N}$ coincide on $[G,N]$ (\cite[Theorem~2.1]{non-extendable}).
In \cite{non-extendable}, they also showed that \eqref{equation:equivalence} holds for the pair $(G,N)=(G,G')$, where $G$ is a free group or the fundamental group of a non-orientable closed connected surface. Therefore, for these examples, $\scl_G$ and $\scl_{G,N}$ coincide on $[G,N]$.
As we stated as Theorem~\ref{thm:one-dim_surf} (\cite[Theorem~1.1]{non-extendable}), the dimension of $\W(\G, \G')$ equals $1$ for $\genus \ge 2$. Therefore Theorem~\ref{thm:main}
implies that $\scl_G$ and $\scl_{G,N}$
can be \emph{non}-equivalent even if the space $\W(G,N)$ is only $1$-dimensional.

We say that a surjective group homomorphism $p \colon G \to \Gamma$ \emph{virtually splits} if there exists a subgroup $\Lambda$ of finite index of $\Gamma$ and a group homomorphism $s \colon \Lambda \to G$ satisfying $p(s(\gamma)) = \gamma$ for every $\gamma \in \Lambda$.
In \cite[Theorem~1.5]{MR4452430}, Kawasaki--Kimura--Matsushita--Mimura showed that $\scl_{G,N} \le 2 \cdot \scl_G$ on $[G,N]$ if the projection $G \to G/N$ virtually splits. In this case, we have $\QQQ(N)^G = i^* \QQQ(G)$; see also \cite[Proposition~1.4]{MR4425357}.
\end{remark}

\section{Concluding remarks}\label{sec:remarks}

\subsection{Non-extendable quasimorphisms and taut foliations} \label{sec:taut_foliation}
Let $X$ be a hyperbolic $3$-manifold that fibers over the circle with fiber $\Sigma_{\genus}$.
Let $\Gps$, $\Nps$, and $\psi \in \Aut_+(\G)$ be as in Theorem \ref{thm:main3}.
Let $i \colon \Nps \to \Gps$ be the inclusion.
It was shown in \cite[Theorem~1.2]{non-extendable} that the space $\W(\Gps, \Nps)$ is isomorphic to $\HHH^2(\Gps;\RR)$, and the dimension of them is equal to $1 + \dim(\Ker (I_{2\genus} - s_\genus(\psi))$.
Here $I_{2\genus}$ denotes the identity matrix of size $2\genus$.

We used the element $\mu_{\overline{\rho}}$ of $\QQQ(\Nps)^{\Gps}$ in the proof of Theorem \ref{thm:main3}, which is non-zero in $\W(\Gps, \Nps)$.
This $\Gps$-invariant homogeneous quasimorphism is from Construction \ref{construction} applied to the representation $\overline{\rho} \colon \Gps \to \h$.
This $\overline{\rho}$ is (the most basic) one of the universal circle representations due to Thurston, that is, the representation corresponds to the taut foliation on $X$ whose leaves are the fibers.
We refer to \cite{MR1965363}, \cite{MR2327361}, and \cite{MR4264583} for taut foliations and universal circle representations.

For a taut foliation $\mathcal{F}$ on $X$, let $\rho_{\mathcal{F}} \colon \Gps \to \h$ be the universal circle representation defined via $\mathcal{F}$.
As was mentioned in Section \ref{sec:thm3}, the pullback $\HHH^2(\Gamps;\RR) \to \HHH^2(\Gps;\RR)$ is surjective. (Recall that $\Gamps=\Gps/\Nps$ and that it is solvable.)
Hence every representation $\rho_{\mathcal{F}}$ gives rise to a $\Gps$-invariant homogeneous quasimorphism $\mu_{\rho_{\mathcal{F}}}$ on $\Nps$ via Construction \ref{construction}.
Here, note that $\mu_{\rho_{\mathcal{F}}}(= \nu_{\rho_{\mathcal{F}, A, u}})$ itself does depend also on the choices of cochains $A$ and $u$.
However, as we discussed in Remark 3.5, the class $[\mu_{\rho_{\mathcal{F}}}]$ in $\QQQ(N_{\psi})^{G_{\psi}}/\HHH^1(N_{\psi};\mathbb{R})^{G_{\psi}}$ is uniquely determined by $\rho_{\mathcal{F}}$.
Now we define $\mu_{\mathcal{F}}$ as the class $[\mu_{\rho_{\mathcal{F}}}]$ in $\W(\Gps, \Nps)$, which depends only on $\mathcal{F}$.
We set $\mu_{\mathcal{F}} = [\mu_{\rho_{\mathcal{F}}}] \in \W(\Gps, \Nps)$.

The goal of this section is to show the following:
\begin{proposition}\label{prop:taut_qm}
  The space $\W(\Gps, \Nps)$ is $\RR$-spanned by
  \[
    \{ \mu_{\mathcal{F}} \mid \mathcal{F} \text{ is a taut foliation on } X \}.
  \]
\end{proposition}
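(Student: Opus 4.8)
The plan is to reduce the proposition, via the bounded-cohomological machinery already set up, to a statement about Euler classes of taut foliations on $X$, and then to prove that statement using Poincar\'{e} duality together with Gabai's existence theorem.

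First I would make the reduction. By \cite[Theorem~1.2]{non-extendable} the quotient space $\QQQ(\Nps)^{\Gps}/(\HHH^1(\Nps;\RR)^{\Gps} + i^*\QQQ(\Gps))$ is isomorphic to $\HHH^2(\Gps;\RR)$, and by Remark~\ref{rem:unique_as_class} the class $\mu_{\mathcal{F}} = [\mu_{\rho_{\mathcal{F}}}]$ corresponds, under this isomorphism, to the Euler class $e(\mathcal{F}) := \rho_{\mathcal{F}}^* e_{\RR} \in \HHH^2(\Gps;\RR)$ of the universal circle representation. (The map $\HHH^2(\Gps;\RR) \to \QQQ(\Nps)^{\Gps}/(\cdots)$ of Remark~\ref{rem:unique_as_class} is defined on all of $\HHH^2(\Gps;\RR)$ because $\Gps$ is Gromov-hyperbolic, so the comparison map $\HHH_b^2(\Gps;\RR) \to \HHH^2(\Gps;\RR)$ is onto.) Since $X = K(\Gps,1)$ is aspherical, $\HHH^2(\Gps;\RR) \cong \HHH^2(X;\RR)$. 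Hence the proposition is equivalent to the assertion that the Euler classes $\{ e(\mathcal{F}) \mid \mathcal{F} \text{ a taut foliation on } X\}$ span $\HHH^2(X;\RR)$.

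Next I would prove this spanning statement by duality. As $X$ is a closed oriented $3$-manifold, cup product gives a perfect pairing $\HHH^2(X;\RR) \times \HHH_2(X;\RR) \to \RR$, so it suffices to show that for every nonzero integral class $w \in \HHH_2(X;\ZZ)$ there is a taut foliation $\mathcal{F}$ with $\langle e(\mathcal{F}), w\rangle \neq 0$; indeed this forces $\bigcap_{\mathcal{F}} (e(\mathcal{F}))^{\perp} = 0$ in $\HHH_2(X;\RR)$, i.e.\ the $e(\mathcal{F})$ span. Fix such a $w$ and let $S$ be a Thurston-norm-minimizing embedded oriented surface representing $w$. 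Because $X$ is hyperbolic it is irreducible and atoroidal, so no essential sphere or torus exists; discarding the (automatically null-homologous) sphere and torus components of $S$, we may assume $S$ is nonempty with all components of genus $\geq 2$, whence $\chi(S) < 0$. By Gabai's theorem, $X$ admits a transversely oriented taut foliation $\mathcal{F}_w$ having $S$ as a leaf (more precisely a union of leaves). Using that the universal circle Euler class $e(\mathcal{F}_w)$ equals the Euler class of the tangent plane bundle $T\mathcal{F}_w$, and that $T\mathcal{F}_w|_S = TS$, we get $\langle e(\mathcal{F}_w), w\rangle = \langle e(TS), [S]\rangle = \chi(S) \neq 0$, as needed.

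The part I expect to require the most care is the $3$-manifold-topology input behind the last paragraph: (a) that $\rho_{\mathcal{F}}^* e_{\RR}$ really is the Euler class of $T\mathcal{F}$ — this belongs to the Thurston--Calegari theory of universal circles, and I would quote it from \cite{MR2327361} (see also \cite{MR1965363,MR4264583}), after recalling that a taut foliation of a hyperbolic $3$-manifold has hyperbolic leaves, so the universal circle is defined; and (b) the sharp form of Gabai's existence theorem producing a taut foliation with a prescribed norm-minimizing surface as a leaf in the closed case. The reduction via \cite{non-extendable}, the Poincar\'{e}-duality bookkeeping, and the observation that a norm-minimizing surface in a hyperbolic $3$-manifold has $\chi < 0$ are routine. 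Finally I would note why fibration foliations alone do not suffice: by Fried's theorem all fibrations of $X$ whose cohomology class lies in a single fibered cone are transverse to one common suspension flow, hence have homotopic tangent plane fields and therefore equal Euler classes, so genuinely non-fibered taut foliations (supplied by Gabai) are needed when $b_1(X) > 1$.
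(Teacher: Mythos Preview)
Your proof is correct and follows the same overall strategy as the paper: reduce to showing that the Euler classes $\rho_{\mathcal{F}}^*e_{\RR}=e(T\mathcal{F})$ of taut foliations span $\HHH^2(X;\RR)$, and then invoke Gabai's existence theorem together with the identification of the universal circle Euler class with the tangent plane bundle Euler class. The only noteworthy difference is in how the spanning is argued: the paper quotes Gabai in the form that every vertex of the dual Thurston norm polytope is realized as $e(T\mathcal{F})$ for some taut $\mathcal{F}$ (and the vertices span because the Thurston norm is nondegenerate on a hyperbolic $3$-manifold), whereas you run the equivalent Poincar\'{e}-duality argument directly, pairing $e(T\mathcal{F}_w)$ against a norm-minimizing surface $S$ with $\chi(S)<0$; these are two packagings of the same theorem, and your version has the mild advantage of making explicit why hyperbolicity is needed (no essential spheres or tori, so $\chi(S)<0$).
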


\begin{proof}
  Let us consider the following diagram whose rows are exact:
  \[
  \xymatrix{
  \HHH^1(\Nps;\RR)^{\Gps} \ar[r] & \QQQ(\Nps)^{\Gps} \ar[r]^-{\mathbf{d}} & \HHH_b^2(\Nps;\RR)^{\Gps} & \\
  & \QQQ(\Gps) \ar[r]^-{\mathbf{d}} \ar[u]^-{i^*} & \HHH_b^2(\Gps;\RR) \ar[r]^{c_{\Gps}} \ar[u]^{i^*} & \HHH^2(\Gps;\RR).
  }
  \]
  Here $\mathbf{d} \colon \QQQ(\Nps)^{\Gps} \to \HHH_b^2(N;\RR)^{\Gps}$ is the map defined by $\mathbf{d}(\mu) = [\delta \mu] \in \HHH_b^2(\Nps;\RR)$.
  Note that $\mathbf{d}(\HHH^1(\Nps;\RR)^{\Gps} + i^*\QQQ(\Gps)) \subset i^*(\HHH_b^2(\Gps;\RR))$ and that $i^* \colon \HHH_b^2(\Gps;\RR) \to \HHH_b^2(\Nps;\RR)^{\Gps}$ is an isomorphism since $\Gamps$ is amenable.
  Since $\Gps$ is Gromov-hyperbolic, the map $c_{\Gps}$ is surjective.
  Hence the map $c_{\Gps} \circ (i^*)^{-1} \circ \mathbf{d}$ induces an injection
  \[
    \W(\Gps, \Nps) = \QQQ(\Nps)^{\Gps}/(\HHH^1(\Nps;\RR)^{\Gps} + i^*\QQQ(\Gps)) \to \HHH^2(\Gps).
  \]

  By the construction of $\mu_{\rho_{\mathcal{F}}}$, we have
  \[
    c_{\Gps} \circ (i^*)^{-1} \circ \mathbf{d}(\mu_{\rho_{\mathcal{F}}}) = \rho_{\mathcal{F}}^*(e_{\RR}).
  \]
  Hence it suffices to show that the cohomology group $\HHH^2(\Gps;\RR)$ is $\RR$-spanned by
  \[
    \{ \rho_{\mathcal{F}}^* (e_{\RR}) \mid \mathcal{F} \text{ is a taut foliation on } X \}.
  \]
This then turns out to be a direct consequence of Gabai's theorem \cite[Remark 7.3]{MR1470750} (see also \cite[Theorem 1.4]{MR4205410}).
  More precisely, let $x^* \colon \HHH^2(X;\RR) \to \RR$ be the dual Thurston norm.
  Since $X$ is hyperbolic, the unit norm ball $B_{x^*}$ of $x^*$ is a polyhedron in $\HHH^2(X;\RR)$ \cite[Theorem 2]{MR823443}.
  Gabai's theorem asserts that every vertex of the dual Thurston norm ball for a compact oriented irreducible $3$-manifold can be realized as the Euler class of some taut foliation.
  Hence, for every vertex $v \in B_{x^*}$, there exists a taut foliation $\mathcal{F}$ such that the Euler class $e(T\mathcal{F})$ of the plane field of $\mathcal{F}$ is equal to $v$.
  Hence the cohomology group $\HHH^2(X;\RR)$ is $\RR$-spanned by
  \[
    \{ e(T\mathcal{F}) \mid \mathcal{F} \text{ is a taut foliation on } X \}.
  \]
  It is known that $\rho_{\mathcal{F}}^* (e_{\RR}) \in \HHH^2(\Gps)$ is equal to $e(T\mathcal{F}) \in \HHH^2(X;\RR)$ under the canonical isomorphism $\HHH^2(\Gps;\RR) \cong \HHH^2(X;\RR)$ (see \cite[Proposition 7.1]{MR4029686}).
  This completes the proof.
\end{proof}

\subsection{Overflow argument}\label{subsec:overdrive}
In this short subsection, we provide an argument to show the non-equivalence of $\scl_G$ and $\scl_{G,N}$. In the proofs of Theorems~\ref{thm:main}, \ref{thm:main3} and \ref{thm:main2}, we have employed the auxiliary lemma (Lemma~\ref{lem:hojo}) to show the non-equivalence of $\scl_G$ and $\scl_{G,N}$.
Conditions (i) and (ii) in Lemma~\ref{lem:hojo} are purely algebraic conditions. Contrastingly, condition (iii) is formulated in terms of the behavior of  values $\mu(x_n)$ as $n\to \infty$, where $(x_n)_{n\in \NN}=([y_1,z_1^n]\cdots [y_k,z_k^n])_{n\in \NN}$.
Given $\mu$, it seems to be a hard task to find elements $y_1,\ldots,y_k,z_1,\ldots,z_k$ that fulfill condition (iii).
 Nevertheless, the following argument, which we call the \emph{overflow argument,} allows us to derive condition (iii)
by
estimating the value of a quasimorphism at a \emph{single} element.

\begin{lemma}[overflow argument] \label{overflow}
  Let $G$ be a group, $N$ a normal subgroup, and let $\mu \in \QQQ(N)^G$.
Let $k$ be a positive integer and let $y_1, \dots, y_k, z_1, \dots, z_k \in G$. Assume that the following three conditions are fulfilled:
  \begin{enumerate}[$(i)$]
    \item $[y_i, z_i] \in N$ for every $i = 1, \dots, k$;
    \item $[y_1, z_1] \cdots [y_k, z_k] \in [G, N]$;
    \item[$(iii)'$] $|\mu([y_1, z_1] \cdots [y_k, z_k])| > (2k-1)D(\mu)$.
  \end{enumerate}
Then we have
\begin{equation}\label{eq:iii}
\lim_{n \to \infty} \mu ([y_1, z_1^n] \cdots [y_k, z_k^n]) = \infty.
\end{equation}
In particular, $\scl_G$ and $\scl_{G,N}$ are \emph{not} equivalent.
\end{lemma}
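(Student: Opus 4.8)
The plan is to reduce the claim to two applications of the iterated defect inequality for $\mu$, after using the commutator identity \eqref{ore_relation} to rewrite each factor $[y_i^n, z_i]$ as a product of $n$ conjugates of $[y_i, z_i]$.

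First I would set $c_i = [y_i, z_i]$ for $i = 1, \ldots, k$ and $S := \sum_{i=1}^{k} \mu(c_i)$; by hypothesis $(i)$ each $c_i$ lies in $N$, and since $\mu \in \QQQ(N)^G$ we have $\mu({}^{g} c_i) = \mu(c_i)$ for every $g \in G$. By \eqref{ore_relation}, $[y_i^n, z_i] = {}^{y_i^{n-1}} c_i \cdots {}^{y_i} c_i \cdot c_i$, so $x_n := [y_1^n, z_1] \cdots [y_k^n, z_k]$ is a product of $nk$ elements of $N$, with exactly $n$ of them $G$-conjugate to $c_i$ for each $i$; in particular the $\mu$-values of these $nk$ factors sum to $nS$.

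Next I would invoke the elementary consequence $\bigl|\mu(w_1 \cdots w_m) - \sum_{j=1}^{m}\mu(w_j)\bigr| \leq (m-1)D(\mu)$ of the definition of the defect, valid for $w_1, \ldots, w_m \in N$. Applied to the $nk$-fold product above it gives $|\mu(x_n) - nS| \leq (nk-1)D(\mu)$; applied to the product $c_1 \cdots c_k$ it gives $\bigl|\mu(c_1 \cdots c_k) - S\bigr| \leq (k-1)D(\mu)$, so hypothesis $(iii)'$ forces $|S| > (2k-1)D(\mu) - (k-1)D(\mu) = kD(\mu)$. Combining the two estimates,
\[
  |\mu(x_n)| \ \geq\ n|S| - (nk-1)D(\mu) \ =\ n\bigl(|S| - kD(\mu)\bigr) + D(\mu),
\]
and the coefficient of $n$ is strictly positive, so $|\mu(x_n)| \to \infty$; this is \eqref{eq:iii} (up to replacing $\mu$ by $-\mu$, which changes nothing here). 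For the final assertion, conditions $(i)$ and $(ii)$ are among the hypotheses and \eqref{eq:iii} is precisely condition $(iii)$ of the auxiliary lemma (Lemma~\ref{lem:hojo}), which then gives that $\scl_G$ and $\scl_{G,N}$ are not equivalent.

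I do not expect a serious obstacle, the argument being short; the only point requiring care is the constant bookkeeping. One must check that the linear-in-$n$ lower bound for $|\mu(x_n)|$, with slope $|S|$, overtakes the accumulated defect error, with slope $kD(\mu)$, and trace why this requires exactly the threshold $(2k-1)D(\mu)$ in $(iii)'$: namely $kD(\mu)$ to beat the error, plus the extra $(k-1)D(\mu)$ lost in passing from $\mu(c_1 \cdots c_k)$ to $S$.
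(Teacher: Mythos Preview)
Your proof is correct and follows essentially the same approach as the paper: expand each $[y_i^n,z_i]$ via \eqref{ore_relation}, use $G$-invariance of $\mu$, and control the error with the iterated defect inequality, then invoke Lemma~\ref{lem:hojo}. The only cosmetic difference is that you decompose $x_n$ in one shot into $nk$ factors (error $(nk-1)D(\mu)$) and compare $S$ to $\mu(c_1\cdots c_k)$ separately, whereas the paper runs three successive $\underset{D}{\sim}$ steps to compare $\mu(x_n)$ directly with $n\mu(c_1\cdots c_k)$, accumulating error $(n(2k-1)-1)D(\mu)$; both routes land on the same linear-in-$n$ lower bound with positive slope.
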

\begin{proof}
By the definition of homogeneous quasimorphisms and $G$-invariance of $\mu$, we have
  \begin{align*}
    \mu([y_1, z_1^n] \dots [y_k, z_k^n]) & \underset{(k-1)D(\mu)}{\sim} \mu([y_1, z_1^n]) + \cdots + \mu([y_k, z_k^n])\\
    & \underset{k(n-1)D(\mu)}{\sim} n (\mu([y_1, z_1] + \cdots + \mu([y_k, z_k]))\\
    & \underset{n(k-1)D(\mu)}{\sim} n\mu([y_1, z_1]\dots [y_k, z_k]).
  \end{align*}
  Hence we obtain
  \begin{align*}
    |\mu([y_1, z_1^n] \dots [y_k, z_k^n]) - n\mu([y_1, z_1]\dots [y_k, z_k])| \leq (n(2k-1)-1)D(\mu).
  \end{align*}
  This, together with (iii)$'$, implies \eqref{eq:iii}. 
Now the latter assertion follows from Lemma~\ref{lem:hojo} since \eqref{eq:iii} is exactly condition (iii) in Lemma~\ref{lem:hojo}.
\end{proof}

\subsection{Rigidity of the invariant quasimorphisms}\label{sec:rigid}

Let $N$ be a normal subgroup of $G$ contained in $G'$.
In Remark \ref{rem:indep}, we showed that the map
\[
  \Phi \colon \Hom(G, \h)_{\basic} \to \QQQ(N)^G/\HHH^1(N;\RR)^G
\]
sending $\rho$ to $[\mu_{\rho}]$ is well-defined.
The goal of this subsection is to show the rigidity of the map $\Phi$ (Proposition \ref{prop:rigidity}).

To state Proposition \ref{prop:rigidity}, we recall the notion of semi-conjugacy introduced in \cite{MR893858}.
The following definition of semi-conjugacy can be found in \cite[Theorem 1.4 (iv)]{MR3692890}.
\begin{definition}\label{def:semiconj}
  Two elements $\rho_1, \rho_2$ in $\Hom(G,\h)$ are \textit{semi-conjugate} if there exist a monotone map $\tf \colon \RR \to \RR$ with $\tf(x + 1) = \tf(x) + 1$ for every $x \in \RR$ and lifts $\trho_1(g), \trho_2(g) \in \th$ of $\rho_1(g), \rho_2(g) \in \h$ for every $g \in G$ such that
  \[
    \trho_1(g)(\tf(x)) = \tf (\trho_2(g)(x))
  \]
  for every $g \in G$ and $x \in \RR$.
\end{definition}
There are other non-equivalent formulations of semi-conjugacy (see \cite{MR3692890}, \cite{MR3888699}, \cite{MR3887602} and the references therein).
Semi-conjugacy in Definition \ref{def:semiconj} is an equivalence relation on $\Hom(G, \h)$, and fits in the following theorem of Ghys.
\begin{theorem}[{\cite{MR893858}, see also \cite[Theorem 1.4]{MR3692890}}] \label{thm:Ghys}
  Let $G$ be a group and $\rho_1$ and $\rho_2$ be elements of $\Hom(G, \h)$.
  Let $e_b \in \HHH_b^2(\h;\ZZ)$ be the bounded Euler class.
  Then, $\rho_1$ is semi-conjugate to $\rho_2$ if and only if $\rho_1^* e_b$ is equal to $\rho_2^* e_b$ in $\HHH_b^2(G;\ZZ)$.
\end{theorem}

Now we are ready to state the main result in this subsection.

\begin{proposition}\label{prop:rigidity}
  Let $\rho_1$ and $\rho_2$ be elements of $\Hom(G, \h)_{\basic}$.
  If $\rho_1$ is semi-conjugate to $\rho_2$, then $\Phi(\rho_1) = \Phi(\rho_2)$.
  In particular, the map $\Phi$ descends to
  \[
    \Phi \colon \Hom(G, \h)_{\basic}/\text{semi-conjugacy} \to \QQQ(N)^G/\HHH^1(N;\RR)^G.
  \]
\end{proposition}


\begin{proof}
  The exact sequence $0 \to \HHH^1(N;\RR)^G \to \QQQ(N)^G \to \HHH_b^2(N;\RR)^G$ implies that the map
  \[
    \QQQ(N)^G/\HHH^1(N)^G \to \HHH_b^2(N;\RR)^G
  \]
  is injective.
  Let us consider the following diagram:
  \begin{align}\label{Phi_diagram}
  \xymatrix{
  \Hom(G;\h)_{\basic} \ar[rr]^-{\Phi} \ar[d]^{e} & & \QQQ(N)^G/\HHH^1(N;\RR)^G \ar[d]^-{\mathbf{d}} \\
  \HHH_b^2(G;\ZZ) \ar[r]^c & \HHH_b^2(G;\RR) \ar[r]^-{i^*} & \HHH_b^2(N;\RR)^G.
  }
  \end{align}
  Here $i^*$ is the map in (\ref{bdd_five_term}), $c$ is the change of coefficients homomorphism, $e$ is the map sending $\rho$ to $\rho^* e_b$, and $\mathbf{d}$ is the map sending $[\mu]$ to $[\delta \mu] \in \HHH_b^2(N;\RR)^G$.
  Note that the map $\mathbf{d}$ is injective since the following is exact:
  \[
    0 \to \HHH^1(N;\RR)^G \to \QQQ(N)^G \to \HHH_b^2(N;\RR)^G.
  \]
  We now show that diagram (\ref{Phi_diagram}) commutes.
  Let $\rho$ be an element of $\Hom(G;\h)$.
  The element $\Phi(\rho)$ is represented by the homogeneous quasimorphism $\nu_{\rho, A, u}$ in Construction \ref{construction}.
  Since $\delta \nu_{\rho, A, u} = (\rho^* \chi_b - p^* A)|_{N} = (\rho^* \chi_b)|_{N}$ and $\chi_b$ is a cocycle of the real bounded Euler class, we have
  \[
    \mathbf{d} \circ \Phi(\rho) = i^* \rho^* [\chi_b] = i^* \circ c \circ e(\rho).
  \]

  Assume that $\rho_1 \in \Hom(G;\h)_{\basic}$ is semi-conjugate to $\rho_2$.
  Then, by Theorem \ref{thm:Ghys} and the commutativity of (\ref{Phi_diagram}), we have
  \[
    \mathbf{d} \circ \Phi(\rho_1) = i^* \circ c \circ e (\rho_1) = i^* \circ c \circ e (\rho_2) = \mathbf{d} \circ \Phi(\rho_2).
  \]
  Since $\mathbf{d}$ is injective, we have that $\Phi(\rho_1) = \Phi(\rho_2)$.
\end{proof}

\begin{remark}
  If $G = \pi_1(\Sigma_{\genus})$ of genus $\genus \geq 2$, then the space $\Hom(G,\h)$ is equal to the space of all representations $\Hom(G,\h)$ from $G$ to $\h$.
\end{remark}

\section*{Acknowledgements}
The authors would like to express their deepest gratitude to Morimichi Kawasaki, who suggested us employing surjections from $\G$ to $\R$ as a possible strategy for  establishing Theorem~\ref{thm:main}. The authors thank Mitsuaki Kimura, Yoshifumi Matsuda, Shigenori Matsumoto, Yoshihiko Mitsumatsu, Hiraku Nozawa for discussions and comments.

The first author is supported by JSPS KAKENHI Grant Number JP21J11199.
The second author and the third author are partially supported by JSPS KAKENHI Grant Number JP19K14536 and JP21K03241, respectively.

\bibliographystyle{amsalpha}
\bibliography{references.bib}
\end{document}